\newcommand{\gettikzxy}[3]{%
  \tikz@s\can@one@point\pgfutil@firstofone#1\relax
  \edef#2{\the\pgf@x}%
  \edef#3{\the\pgf@y}%
}
\newtheorem{theorem}{Theorem}
\newtheorem{lemma}[theorem]{Lemma}
\newtheorem{proposition}[theorem]{Proposition}
\newtheorem{corollary}[theorem]{Corollary}
\theoremstyle{definition}
\numberwithin{theorem}{section}
\newcommand{\Mod}{\mathrm{Mod}}
\newcommand{\C}{\mathcal{C}}
\newcommand{\calO}{\mathcal{O}}
\renewcommand{\P}{\mathcal{P}}
\newcommand{\grp}[1]{\langle{#1}\rangle}
\newcommand{\nc}[1]{\left\langle\hspace{-.7mm}\left\langle{#1}\right\rangle\hspace{-.7mm}\right\rangle}
\newcommand{\N}{\mathbb{N}}
\newcommand{\Z}{\mathbb{Z}}
\newcommand{\R}{\mathbb{R}}
\newcommand{\Stab}{\mathrm{Stab}}
\newcommand{\Ke}{C_{\rm e}}
\newcommand{\Kp}{C_{\rm p}}
\newcommand{\Kg}{C_{\rm g}}
\newcommand{\Kwpd}{C_{\rm WPD}}
\newcommand{\Lwind}{L_0}
\newcommand{\Lhyp}{L_{\rm hyp}}
\newcommand{\Llift}{L_{\rm lift}}
\newcommand{\Lwpd}{L_{\rm WPD}}
\newcommand{\Lshort}{L_{\rm short}}
\newcommand{\Lpro}{L_{\rm pro}}
\newcommand{\Y}{\mathbb{Y}}
\newcommand{\from}{\colon\thinspace}
\newcommand{\BigFreeProd}[1]{\raisebox{-1.5pt}{ \ensuremath{\underset{\mbox{\scriptsize{$#1$}}}{\mbox{\Huge{$\ast$}}}}}\,}
\newcommand{\p}[1]{\medskip\paragraph{{\it #1}}}
\newcommand{\ip}[1]{\medskip\paragraph{{\em #1}}}
\newcommand{\hh}{\text{\sf h}}
\newcommand{\ii}{\text{\sf i}}
\newcommand{\nn}{\text{\sf n}}
\DeclareMathOperator{\EC}{EC}
\DeclareMathOperator{\Piv}{Piv}
\newcommand{\Pive}{\Piv^{*}}
\title[Hyperbolic quotients of projection complexes]{Hyperbolic quotients of projection complexes}
\author{Matt Clay}
\author{Johanna Mangahas}
\date{} 
\begin{document}

\begin{abstract}
This paper is a continuation of our previous work with Margalit where we studied group actions on projection complexes.  In that paper, we demonstrated sufficient conditions so that the normal closure of a family of subgroups of vertex stabilizers is a free product of certain conjugates of these subgroups.  In this paper, we study both the quotient of the projection complex by this normal subgroup and the action of the quotient group on the quotient of the projection complex.  We show that under certain conditions that the quotient complex is $\delta$--hyperbolic.  Additionally, under certain circumstances, we show that if the original action on the projection complex was a non-elementary WPD action, then so is the action of the quotient group on the quotient of the projection complex.  This implies that the quotient group is acylindrically hyperbolic.
\end{abstract}

\maketitle


\section{Introduction}\label{sec:intro}

Projection complexes were originally defined by Bestvina--Bromberg--Fujiwara and were used to show that the mapping class group of an orientable surface has finite asymptotic dimension~\cite{ar:BBF15}.  The motivating idea behind these complexes is the following.  Start with a collection of subspaces $\{Z_i\}$ contained in some metric space $X$.  We want these subspaces to satisfy some properties akin to negative curvature; in particular, we require that the nearest point projection from any one subset $Z_i$ to another subset $Z_j$ has uniformly bounded diameter.  For example, one could take $X$ to be the hyperbolic plane and the collection $\{Z_i\}$ to be the orbit of a geodesic in $X$ under a discrete group of isometries of $X$.  The projection complex built out of this data is the graph with vertex set $\{Z_i\}$ where two vertices $Z_i$ and $Z_j$ are joined by an edge if the diameter of the union of their projections to any other $Z_k$ is small.  A key feature of a projection complex is that, in general, it is a quasi-tree, in other words, it is quasi-isometric to a tree~\cite[Theorem~3.16]{ar:BBF15}.  Projection complexes have found several useful applications lately by many authors~\cite{ar:DGO17,ar:Dahmani18,ar:BBFS20,un:BBF,un:DHS,un:GH,un:HQR}.      

In previous work with Margalit, we studied group actions on projection complexes~\cite{un:CMM}.  We derived a structure theorem for normal subgroups generated by elliptic elements under some hypotheses; see Section~\ref{sec:windmills} for the exact statement.  We were able to apply our structure theorem to produce new examples of normal subgroups of mapping class groups of orientable surfaces that are isomorphic to right-angled Artin groups.  In particular, we produced examples that were not free.  

In this paper, we work in the general setting of a group acting on a projection complex with the same set of hypotheses as before and study both the quotients of the projection complex by such normal subgroups and the action of the quotient group on the corresponding quotient complex.  These appear as Theorem~\ref{thm:hyperbolic quotient} and Theorem~\ref{thm:wpd action on quotient} respectively.  To state these results, we describe the set up we studied before and continue to study in this paper.    

Briefly, a projection complex is a graph $\P$ and a collection of functions
\[
d_v \from V \setminus \{v\} \times V \setminus \{v\} \to \R_{\geq 0}
\]
where $V$ is the set of vertices of $\P$ and $v \in V$.   The full definition appears in Section~\ref{sec:projection complex}.  Following our previous work and as explained below, our definition is a mild modification of the original definition of Bestvina--Bromberg--Fujiwara.

Let $\P$ be a projection complex, and let $G$ be a group that acts on $\P$.    Further, for each vertex $v$ of $\P$, let $R_v$ be a subgroup of the stabilizer of $v$ in $G$.  Let $L > 0$.  We say that the family of subgroups $\{R_v\}$ is an \emph{equivariant $L$--spinning family} of subgroups of $G$ if it satisfies the following two conditions:

\begin{itemize}\itemindent=-22pt
\item \emph{Equivariance:} If $g$ lies in $G$ and $v$ is a vertex of $\P$ then
\[
gR_vg^{-1} = R_{gv}.
\]  
\item \emph{Spinning:} For any distinct vertices $v$ and $w$ of $\P$ and any nontrivial $h \in R_v$ we have
\[
d_v(w,hw) \geq L.
\]
\end{itemize}
By the equivariance condition, for each vertex $v$ the subgroup $R_v$ is normal in $\Stab_G(v)$, and the subgroup $H$ of $G$ generated by the $R_v$ is normal in $G$.  If $\{v_i\}$ is a set of orbit representatives for the action of $G$ on the vertices of $\P$, then $H$ is the normal closure of the set $\left\{R_{v_i}\right\}$. 

We can now state our theorem regarding the quotient complex.

\begin{theorem}\label{thm:hyperbolic quotient}
Let $\P$ be a projection complex and let $G$ be a group acting on $\P$.  There exists a constant $\Lhyp(\P)$ with the following property.  If $L \geq \Lhyp(\P)$ and  if $\{R_v\}$ is an equivariant $L$--spinning family of subgroups of $G$ then $\P/\grp{R_v}$ is $\delta$--hyperbolic.
\end{theorem}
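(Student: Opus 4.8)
The plan is to produce, on the vertex set of $\P/\grp{R_v}$, a family of projection functions satisfying the axioms of Bestvina--Bromberg--Fujiwara, to check that the projection complex built from these functions is $(G/\grp{R_v})$--equivariantly quasi-isometric to $\P/\grp{R_v}$, and then to conclude $\delta$--hyperbolicity from the fact that projection complexes are quasi-trees \cite{ar:BBF15}. Write $H = \grp{R_v}$ and let $\theta$ be the projection constant and $K$ the edge threshold of $\P$. The structural input on $H$ that I would use is the structure theorem of \cite{un:CMM}: for $L$ large enough $H$ is a free product of a family of conjugates of the $R_{v_i}$, each vertex stabilizer in $H$ is exactly the corresponding $R_v$, and this comes with a normal form for elements of $H$ together with a description of fundamental domains for the action on $\P$ in terms of ``short'' elements.

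First I would define the quotient projection data. Vertices of $\P/H$ are the orbits $\bar v = Hv$; for distinct orbits $\bar u \neq \bar v$ and any orbit $\bar w$ set
\[
\bar d_{\bar u}(\bar v, \bar w) \;=\; \min\bigl\{\, d_{u'}(v', w') \;:\; u' \in \bar u,\ v' \in \bar v,\ w' \in \bar w \,\bigr\},
\]
which is well defined because the action of $G$ on $\P$ preserves the functions $d_v$, so $d_{u'}(v',w')$ depends only on the $H$--orbit of the triple $(u',v',w')$. Symmetry is immediate, and the Behrstock-type inequality $\min\{\bar d_{\bar u}(\bar v,\bar w),\,\bar d_{\bar w}(\bar v,\bar u)\}\le\theta$ follows at once from the corresponding inequality upstairs by inserting any single choice of lifts $u',v',w'$. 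The finiteness axiom follows from finiteness upstairs: if $\bar d_{\bar u}(\bar v,\bar w)>\theta$ then some representative $u'$ has $d_{u'}(v_0,w_0)>\theta$ for a fixed pair of lifts $v_0,w_0$, and there are only finitely many such $u'$. So the projection complex built from $\{\bar d_{\bar u}\}$ is a quasi-tree, and, inserting lifts once more, every edge of $\P/H$ is an edge of it, so the vertex-bijective map from $\P/H$ to this new complex is distance non-increasing. None of this uses that $L$ is large.

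The content of the theorem, and the step I expect to be the main obstacle, is the reverse estimate: that distances in $\P/H$ are not much smaller than those in the new projection complex, equivalently that the vertex-bijective map above is a quasi-isometry. Using a coarse distance formula for the quasi-tree $\P$ (as in \cite{ar:BBF15}), this reduces to showing that if, for every orbit $\bar u$, some pair of lifts of $\bar v$ and $\bar w$ project $d_{u'}$--close, then a \emph{single} element $h\in H$ makes $v$ and $hw$ project close on \emph{all} the relevant vertices at once --- that is, the ``preferred'' lifts seen from different orbits $\bar u$ are globally consistent. A failure would amount to $H$ creating a short loop or short-cut in $\P/H$ that collapses an essential portion of a $\P$--geodesic. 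Two vertices on a $\P$--geodesic from $v$ to $hw$ that lie in a common $H$--orbit differ by a nontrivial element of $H$, and the spinning hypothesis $d_u(w,hw)\ge L$ forbids such a pair from being positioned so that identifying them shortens the geodesic; the general statement is obtained by writing $h$ in the normal form supplied by the structure theorem of \cite{un:CMM} and estimating the distortion factor by factor, the spinning hypothesis bounding exactly the projection terms $d_v(\cdot,\cdot)$ that occur. One then takes $\Lhyp(\P)$ to be the threshold on $L$ --- an explicit function of $\theta$ and $K$ --- that makes this argument go through while also validating the structure theorem.

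An alternative that avoids naming the quotient data is to verify Manning's bottleneck criterion directly on $\P/H$: given $\bar x,\bar y$, lift to $x,y$ and take a bottleneck vertex $m$ between $x$ and $y$ in the quasi-tree $\P$; the task is to show that $\bar m$ is a bottleneck between $\bar x$ and $\bar y$, i.e.\ that for every $h\in H$ the bottleneck between $x$ and $hy$ lies, modulo $H$, in a bounded neighborhood of $\bar m$. The same short-cut--exclusion argument via $L$--spinning and the free-product normal form is the crux there as well.
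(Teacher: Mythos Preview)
Your strategy is genuinely different from the paper's, and while it is a natural idea, two of the steps you pass over quickly are exactly where all the content lives.

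First, a concrete gap: you check symmetry, the Behrstock inequality, and finiteness for your quotient functions $\bar d_{\bar u}$, but you never address the triangle inequality, which is one of the projection complex axioms (see Section~\ref{sec:projection complex}). With the definition $\bar d_{\bar u}(\bar v,\bar w)=\min_{u',v',w'} d_{u'}(v',w')$ it need not hold: the minimizers for $\bar d_{\bar u}(\bar v,\bar z)$ and for $\bar d_{\bar u}(\bar z,\bar w)$ can involve different $H$--translates of $z$ (and of $u$), so there is nothing to add. Recovering even a coarse triangle inequality already requires the ``consistency of lifts'' statement you later flag as the main obstacle; these are not independent steps.

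Second, your sketch of that consistency step (``write $h$ in normal form and estimate factor by factor'') does not engage with the actual difficulty. The paper handles the analogous issue by an explicit inductive \emph{shortening} mechanism (Proposition~\ref{prop:shorten}): given $x\neq hx$ with $h\in H$, one locates a vertex $v$ with $d_v(x,hx)>L/10$ (so $v$ lies on any geodesic from $x$ to $hx$) and an element $h_v\in R_v$ with $h_vh$ of strictly smaller complexity, where complexity is a conjugacy-adapted refinement of level and syllable length. This uses the pivot-point calculus of \cite{un:CMM} in a fairly delicate way, not just the free-product decomposition.

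By contrast, the paper's proof of Theorem~\ref{thm:hyperbolic quotient} never attempts to put projection data on the quotient. It shows instead (Proposition~\ref{prop:lift quad}, specialized to a degenerate quadrilateral) that every geodesic triangle in $\P/H$ lifts to a geodesic triangle in $\P$: lift the three sides to a path from $x_0$ to $hx_0$, then repeatedly bend at the vertex supplied by Proposition~\ref{prop:shorten} until $h$ is trivial. Since $\P$ is $\delta$--hyperbolic and the quotient map is $1$--Lipschitz, thinness descends. Your bottleneck-criterion alternative is closer in spirit to this, but it too would need the shortening mechanism to control how $H$ moves the bottleneck vertex; so either route funnels through the same lemma, and the triangle-lifting packaging is the shorter one.
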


We also examine the action of the quotient group $G/\grp{R_v}$ on the quotient space $\P/\grp{R_v}$.  Our result on the action briefly says that certain features of the action of $G$ on $\P$ persist in the quotient action.  Before we can state our result on this action, we need to define a number of notions.  

Let $X$ be a geodesic metric space and let $G$ be a group that acts on $X$ by isometries.  An element $f$ of $G$ is \emph{hyperbolic} if 
\[ \lim_{n \to \infty} \frac{d(x,f^nx)}{n} \] is positive for some $x \in X$, equivalently, for any $x \in X$.  Two hyperbolic elements, $f_1$ and $f_2$, of $G$ are \emph{independent} if $d(f_1^{n_1}x,f_2^{n_2}x) \to \infty$ as $n_1,n_2 \to \pm \infty$ for some $x \in X$, equivalently, for any $x \in X$.  

An element $f$ of $G$ is a \emph{WPD element} if $f$ is hyperbolic and if for all points $x \in X$ and for all $D \geq 0$, there is an $M \geq 0$ such that the set 
\[ \{g \in G \mid d(x,gx) \leq D \mbox{ and } d(f^M x, gf^Mx ) \leq D \} \] is finite.  We remark that it suffices to demonstrate finiteness of the above set at a single point in $X$.  The notion of a WPD element was introduced by Bestvina--Fujiwara as a tool for constructing quasi-morphisms~\cite{ar:BF02}.  There are several known examples of WPD elements: pseudo-Anosov mapping classes acting on the corresponding curve complex~\cite{ar:BF02} and fully irreducible outer automorphisms of a free group acting on the corresponding free factor complex~\cite{ar:BF14-1} for instance.  If the action of $G$ on $X$ is properly discontinuous, then any hyperbolic element is a WPD element.        

The action of $G$ on $X$ is a \emph{non-elementary WPD action} if there exist two elements in $G$ that are WPD elements and independent.  We remark that if $X$ is $\delta$--hyperbolic, $G$ is not virtually cyclic and there is one element $f$ in $G$ that is a WPD element, then for some element $g$ in $G$, the elements $f$ and $gfg^{-1}$ are independent WPD elements.  In fact, one can take any $g \in G$ such that $\grp{f} \cap g\grp{f}g^{-1}$ is finite. 

We can now state our theorem on the action of the quotient group on the quotient complex. 

\begin{theorem}\label{thm:wpd action on quotient}
Let $\P$ be a projection complex and let $G$ be a group with a non-elementary WPD action on $\P$.  There exists a constant $\Lwpd(\P,G)$ with the following property.  If $L \geq \Lwpd(\P,G)$ and if $\{R_v\}$ is an equivariant $L$--spinning family of subgroups of $G$ then the action of  $G/\grp{R_v}$ on $\P/\grp{R_v}$ is a non-elementary WPD action.  

Precisely, if $f_1$ and $f_2$ are independent WPD elements of $G$ for its action on $\P$, then there is a constant $\Lwpd(\P,f_1,f_2)$ such that their images $\bar f_1$ and $\bar f_2$ in $G/\grp{R_v}$ are independent WPD elements for the action of $G/\grp{R_v}$ on $\P/\grp{R_v}$ when $L \geq \Lwpd(\P,f_1,f_2)$ and $\{R_v\}$ is an equivariant $L$--spinning family of subgroups of $G$.
\end{theorem}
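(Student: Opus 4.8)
The plan is to establish the precise statement; the first assertion then follows by applying it to any pair $f_1,f_2$ of independent WPD elements of $G$ for its action on $\P$ (which exist since that action is a non-elementary WPD action) and setting $\Lwpd(\P,G):=\Lwpd(\P,f_1,f_2)$. So fix independent WPD elements $f_1$ and $f_2$ of $G$, write $H=\grp{R_v}$, $\bar G=G/H$, $\bar\P=\P/H$, and let $\pi\from\P\to\bar\P$ be the ($1$--Lipschitz, $\bar G$--equivariant) quotient map, so that $\pi(f y)=\bar f\,\pi(y)$ for the image $\bar f$ of $f\in G$. We take $L\geq\Lhyp(\P)$, so that $\bar\P$ is $\delta$--hyperbolic by Theorem~\ref{thm:hyperbolic quotient}, and $L\geq\Llift(\P)$, so that the lifting lemma applies: every geodesic of $\bar\P$ is the $\pi$--image of a uniform quasi-geodesic of $\P$ on which $\pi$ is injective. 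Since $\P$ is a quasi-tree, and so $\delta$--hyperbolic, each $f_i$ has an $f_i$--invariant quasi-axis $\gamma_i\subseteq\P$ along which $f_i$ coarsely translates by a positive amount $\tau_i$.

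The heart of the proof---and the step I expect to be the main obstacle---is the assertion that \emph{for $L$ large enough (depending on $\P$, $f_1$, $f_2$) the restriction $\pi|_{\gamma_i}$ is a quasi-isometric embedding of $\gamma_i$ into $\bar\P$, with constants depending only on $\P$ and $f_i$.} I would prove this by contradiction: if $\pi$ identifies two points $a,b$ of $\gamma_i$ that are far apart in $\P$, then $b=ha$ for some nontrivial $h\in H$, so the long subsegment of the quasi-axis $\gamma_i$ between $a$ and $b$ is a path from $a$ to $ha$. By the structure theorem expressing $H$ as a free product of conjugates of the $R_v$ from~\cite{un:CMM} (see Section~\ref{sec:windmills}), together with the projection-complex description of geodesics, this path must pass uniformly close to the pivot vertices attached to a reduced expression for $h$; the spinning condition with large $L$ forces the projections at consecutive such vertices to be at least $L$, which sharply constrains the way the single quasi-geodesic $\gamma_i$ can realize all of them. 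Quantitatively, a fold over a segment longer than a bound depending only on $\P$ and $f_i$ either violates the spinning inequalities along $\gamma_i$ directly, or yields infinitely many distinct elements of $G$ coarsely stabilizing a fixed long subsegment of $\gamma_i$---after translating so that the subsegment contains a chosen basepoint $x\in\gamma_i$ and a large power translate $f_i^Nx$---contradicting that $f_i$ is a WPD element. In particular, taking $a$ and $b$ on a common $\langle f_i\rangle$--orbit shows $\langle f_i\rangle\cap H=\{1\}$, so that $\gamma_i$ is coarsely disjoint from each of its nontrivial $H$--translates.

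Granting this, $\pi(\gamma_i)$ is a $\bar f_i$--invariant quasi-geodesic line in $\bar\P$ along which $\bar f_i$ coarsely translates by a positive amount, so $\bar f_i$ is a hyperbolic element for the action of $\bar G$ on $\bar\P$. To see that $\bar f_i$ is a WPD element, fix $x\in\gamma_i$ and $D\geq 0$, and suppose toward a contradiction that for every $M$ the set $\{\bar g\in\bar G\mid d_{\bar\P}(\pi x,\bar g\pi x)\leq D\text{ and }d_{\bar\P}(\bar f_i^{M}\pi x,\bar g\bar f_i^{M}\pi x)\leq D\}$ is infinite. Choose $M$ large in terms of $D$, of $\delta$, of the quasi-isometry constants above, and of the WPD function of $f_i$; then $\pi x$ and $\bar f_i^{M}\pi x=\pi(f_i^{M}x)$ lie far apart on the quasi-geodesic $\pi(\gamma_i)$, so any such $\bar g$ coarsely fixes the whole subsegment of $\pi(\gamma_i)$ between them. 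Lifting a geodesic of $\bar\P$ that closely follows this subsegment, and using the lifting lemma together with the quasi-isometric embedding of $\gamma_i$, each such $\bar g$ has a representative $g\in G$ that coarsely preserves $\gamma_i$ and satisfies $d_\P(x,gx)\leq D'$ and $d_\P(f_i^{M}x,gf_i^{M}x)\leq D'$ for an enlarged constant $D'$; moreover $\bar g\mapsto g$ is injective, being a section of $G\to\bar G$. This contradicts the WPD property of $f_i$ at $x$ with constant $D'$, so $\bar f_i$ is a WPD element.

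It remains to see that $\bar f_1$ and $\bar f_2$ are independent. If not, these hyperbolic elements share a fixed point $\xi\in\partial\bar\P$; conjugating $\bar f_1$ by high powers of $\bar f_2$ then produces hyperbolic elements of $\bar G$ whose axes have both endpoints arbitrarily near $\xi$ and which therefore coarsely fix arbitrarily long subsegments of a fixed sub-ray of $\pi(\gamma_1)$, and comparing these with powers of $\bar f_1$ yields infinitely many distinct elements of $\bar G$ coarsely fixing a fixed long segment of $\pi(\gamma_1)$ containing $\pi x$ and $\bar f_1^{M}\pi x$, contradicting that $\bar f_1$ is a WPD element. (If $\bar f_1$ and $\bar f_2$ share both endpoints they have a common power in $\bar G$, which lifts to a relation of the form $f_1^{p}f_2^{-q}\in H$ with $(p,q)\neq(0,0)$; this would force $\pi(\gamma_1)$ and $\pi(\gamma_2)$ to be at bounded Hausdorff distance, hence some nontrivial $H$--translate of $\gamma_1$ to coarsely coincide with $\gamma_2$ in $\P$, which contradicts the quasi-isometric embedding of the previous paragraph together with the independence of $f_1$ and $f_2$ when $L$ is large.) Therefore $\bar f_1$ and $\bar f_2$ are independent WPD elements for the action of $\bar G$ on $\bar\P$, so that action is a non-elementary WPD action. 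Taking $\Lwpd(\P,f_1,f_2)$ to be the maximum of $\Lhyp(\P)$, $\Llift(\P)$, and the finitely many thresholds produced above completes the proof.
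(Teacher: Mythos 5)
Your overall architecture is reasonable and roughly parallel to the paper's (show the images are hyperbolic, then WPD, then independent, by comparing geodesics in $\P$ and in $\P/\grp{R_v}$), but the step you yourself flag as ``the main obstacle''---that $\pi$ restricted to a quasi-axis $\gamma_i$ is a quasi-isometric embedding---is exactly where the proof is missing, and the heuristic you offer for it does not close. If $a$ and $b=ha$ lie on $\gamma_i$ with $h\in H$ nontrivial, the shortening proposition produces a vertex $v$ with $d_v(a,ha)>L/10$ and an element $h_v\in R_v$ lowering the complexity of $h$; but there is no contradiction in a geodesic (or quasi-geodesic) segment passing close to all the pivot points of $h$---that is simply what the bounded geodesic image property predicts. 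Nothing in the spinning inequalities is ``violated,'' and the alternative branch of your dichotomy (infinitely many elements coarsely stabilizing a long subsegment) does not materialize, since $h$ itself moves $a$ far away. Moreover, ruling out exact identifications $\pi(a)=\pi(b)$ is not the same as bounding $d_{\P/H}(\pi a,\pi b)$ from below, which is what a quasi-isometric embedding requires.

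The missing ingredient is the paper's Lemma~\ref{lem:bounded projections}: for a hyperbolic isometry $f$ of a projection complex, the orbit of a basepoint has \emph{uniformly bounded projections}, i.e.\ $d_v(x_0,f^nx_0)\leq B_f$ for all $n$ and all $v$, proved from the Finiteness axiom and the bounded path image property. This bound is what makes the shortening argument terminate in Lemma~\ref{lem:project geo}: when one lifts a quotient geodesic $\beta$ from $\pi(x_0)$ to $\pi(f^nx_0)$ and applies Proposition~\ref{prop:shorten}, the shortening vertex $v$ satisfies $d_v(\beta(0),\beta(n'))>L/10-B>\Kg$ \emph{because} $d_v(x_0,f^nx_0)\leq B$, which forces $v$ onto $\beta$ so that bending can reduce complexity. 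Without that bound you cannot place the shortening vertex on the lifted quotient geodesic, and the induction never starts. The same bound (applied jointly to the two orbits, giving $d_v(f_1^{n_1}x_0,f_2^{n_2}x_0)\leq B$) is what lets the paper conclude that orbit distances are preserved \emph{exactly} in the quotient, which yields hyperbolicity and independence of $\bar f_1,\bar f_2$ immediately and feeds the quadrilateral-lifting statement (the ``additionally'' clause of Proposition~\ref{prop:lift quad}, with $\Llift(B)$ and $\Lpro(B)$ depending on $B$, not just on $\P$) used for the WPD descent. Your WPD and independence paragraphs are plausible in outline but both rest on the unproved embedding claim, so as written the proposal has a genuine gap at its core.
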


Whereas the constant in Theorem~\ref{thm:hyperbolic quotient} does not depend on $G$, the constant in Theorem~\ref{thm:wpd action on quotient} necessarily does.  Indeed, if $G$ is equal to $\grp{R_v}$ then the quotient group is trivial.  Hence we must choose $L$ after $G$---more precisely after choosing two independent WPD elements---to ensure that the quotient is as claimed.

There is a strengthening of the WPD condition called acylindricity that arises in several settings that we describe now.

Let $X$ be a metric space and let $G$ be a group acting on $X$ by isometries.  The action is \emph{acylindrical} if for all $D \geq 0$ there exist $R \geq 0$ and $N \geq 0$ such that for all points $x$ and $y$ in $X$ where $d(x,y) \geq R$, the set
\[ \{ g \in G \mid d(x,gx) \leq D \mbox{ and } d(y,gy) \leq D \}  \] contains at most $N$ elements.

A group $G$ is \emph{acylindrically hyperbolic} if it admits an acylindrical action on a hyperbolic space for which there exist elements $f_1$ and $f_2$ in $G$ that are hyperbolic and independent.  Both the mapping class group of an orientable surface~\cite{ar:Bowditch08} and the outer automorphism group of a finitely generated free group are acylindrically hyperbolic~\cite{ar:Osin16}.  There are several other examples and much is known about this class of groups.  The paper by Osin contains a survey of examples and results for acylindrically hyperbolic groups~\cite{ar:Osin16}.

Osin derived a number of conditions that are equivalent to acylindrical hyperbolicity, one of which is that the group is not virtually cyclic and admits an action on a $\delta$--hyperbolic space where one element is a WPD element~\cite[Theorem~1.2]{ar:Osin16}.  Hence we obtain the following corollary of Theorems~\ref{thm:hyperbolic quotient} and ~\ref{thm:wpd action on quotient}.

\begin{corollary}\label{co:acylindrically}
Let $\P$ be a projection complex and let $G$ be a group with non-elementary WPD action on $\P$.  There exists a constant $\Lwpd(\P,G)$ with the following property.  If $L \geq \Lwpd(\P,G)$ and if $\{R_v\}$ is an equivariant $L$--spinning family of subgroups of $G$ then $G/\grp{R_v}$ is acylindrically hyperbolic.
\end{corollary}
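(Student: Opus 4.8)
The plan is to deduce the corollary by feeding the two main theorems into Osin's characterization of acylindrically hyperbolic groups. First I would set the threshold $\Lwpd(\P,G)$ appearing in the statement to be the maximum of the constant $\Lhyp(\P)$ from Theorem~\ref{thm:hyperbolic quotient} and the constant from Theorem~\ref{thm:wpd action on quotient}, so that a single choice of $L$ activates both results. Then, assuming $L \geq \Lwpd(\P,G)$ and that $\{R_v\}$ is an equivariant $L$--spinning family, Theorem~\ref{thm:hyperbolic quotient} shows that $\P/\grp{R_v}$ is $\delta$--hyperbolic, and Theorem~\ref{thm:wpd action on quotient} shows that the action of $\bar G = G/\grp{R_v}$ on $\P/\grp{R_v}$ is a non-elementary WPD action; in particular there are two independent WPD elements $\bar f_1, \bar f_2 \in \bar G$ for this action.

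Next I would record that $\bar G$ is not virtually cyclic. Indeed $\bar f_1$ and $\bar f_2$ are independent hyperbolic isometries of the $\delta$--hyperbolic space $\P/\grp{R_v}$, so a standard ping-pong argument on their axes yields a rank-two free subgroup of $\bar G$ generated by suitable powers of $\bar f_1$ and $\bar f_2$; a group containing a nonabelian free subgroup is not virtually cyclic. This is the same observation already noted in the discussion preceding the corollary.

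Finally I would invoke Osin's theorem~\cite[Theorem~1.2]{ar:Osin16}: a group that is not virtually cyclic and admits an action on a $\delta$--hyperbolic space possessing a WPD element is acylindrically hyperbolic. Taking the space $\P/\grp{R_v}$, the group $\bar G$, and the WPD element $\bar f_1$ supplies exactly these hypotheses, whence $G/\grp{R_v}$ is acylindrically hyperbolic.

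Since both inputs are already established, there is no substantive obstacle in this argument; the only step meriting an explicit remark is the verification that $\bar G$ is not virtually cyclic, which is automatic from the presence of two independent loxodromic elements in the quotient action.
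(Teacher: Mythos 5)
Your proposal is correct and follows essentially the same route as the paper: combine Theorem~\ref{thm:hyperbolic quotient} and Theorem~\ref{thm:wpd action on quotient} (taking $\Lwpd(\P,G)$ large enough to activate both) and feed the resulting $\delta$--hyperbolic space with its non-elementary WPD action into Osin's characterization \cite[Theorem~1.2]{ar:Osin16}. Your explicit verification that $G/\grp{R_v}$ is not virtually cyclic, via the free subgroup generated by powers of the two independent WPD elements, is a point the paper leaves implicit but is standard and correct.
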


In Section~\ref{sec:examples} we describe new examples of acylindrically hyperbolic groups coming from this construction.  These groups are quotients of the mapping class group of an orientable surfaces by the normal subgroups we produced in our previous work.

The strategy to prove Theorem~\ref{thm:hyperbolic quotient} and Theorem~\ref{thm:wpd action on quotient} is very similar to strategy of Dahmani--Hagen--Sisto in a recent paper~\cite{un:DHS}.  In this paper, Dahmani--Hagen--Sisto consider the action of the subgroup of the mapping class group generated by $k$th powers of Dehn twists on the curve graph, i.e., the 1--skeleton of the curve complex and they prove results similar to Theorem~\ref{thm:hyperbolic quotient} and Theorem~\ref{thm:wpd action on quotient}.  They make use of the fact shown by Dahmani that the curve graph has the structure of a \emph{composite projection graph}~\cite{ar:Dahmani18}.  That is, there is a partition of the curve graph into finitely many pieces that behave like a projection complex, along with certain combatility conditions on how the pieces interact.  Since we deal with a projection complex as opposed to a composite projection graph, some parts of their strategy can be simplified.

In order to prove Theorem~\ref{thm:hyperbolic quotient}, we show that geodesic triangles in $\P/\grp{R_v}$ lift to geodesic triangles in $\P$ (Proposition~\ref{prop:lift quad}).  As $\P$ is a quasi-tree, it is a $\delta$--hyperbolic metric space and hence geodesic triangles are $\delta_0$--thin for some $\delta_0$.  As the quotient map $p \from \P \to \P/\grp{R_v}$ is 1--Lipschitz, this shows that the geodesic triangles in $\P/\grp{R_v}$ are $\delta_0$--thin as well.  The proof of Theorem~\ref{thm:wpd action on quotient} is similar except that it involves lifting geodesic quadrilaterals.  A key fact needed here is that a geodesic in $\P$ for which the projection of any two of its vertices to any other vertex in $\P$ is uniformly bounded is isometrically embedded in the quotient (Lemma~\ref{lem:bounded projections}). 

A closed path in $\P/\grp{R_v}$ can be lifted to a path in $\P$ with endpoints $x$ and $hx$ for some $h \in \grp{R_v}$.  We describe a technique called \emph{path bending} for replacing the lifted path with a new lift.  There is a notion of \emph{complexity} for an element in $\grp{R_v}$.  We show that when $x \neq hx$, we can bend the given lift to get a lift from $x$ to $h'x$ where $h'$ has less complexity than that of $h$ (Proposition~\ref{prop:shorten}).  This is the technique known as \emph{shortening} and it plays a key role in understanding both lifts (Proposition~\ref{prop:lift quad}) and images (Lemma~\ref{lem:project geo}).  This technique was introduced by Dahmani--Hagen--Sisto and is also essential to their work~\cite{un:DHS}. 

\subsection{Outline of Paper} Section~\ref{sec:prelims} collects the necessary facts on projection complexes that are needed for the remainder.  Starting in Section~\ref{sec:shortening}, we follow the strategy of Dahmani--Hagen--Sisto~\cite{un:DHS}. In Section~\ref{sec:shortening}, we prove the main technical tool of the paper, Proposition~\ref{prop:shorten}.  This is the technique known as shortening and allows us to replace a lift of a path in the quotient of the projection complex with another lift that is simpler in a precise sense.  We apply the shortening tool in Section~\ref{sec:lifting} to show that geodesic quadrilaterals in the quotient of the projection complex lift to geodesic quadrilaterals.  The proof of Theorem~\ref{thm:hyperbolic quotient} appears in Section~\ref{sec:proof hyperbolic}.  In Section~\ref{sec:small}, we show that when vertices along a geodesic in the projection complex have bounded projections, the image of the geodesic in the quotient graph is still a geodesic.  Using this, we can establish that certain WPD elements for the action of $G$ on $\P$ have images in $G/\grp{R_v}$ that are still WPD elements for the action of $G/\grp{R_v}$ on $\P/\grp{R_v}$.  In Section~\ref{sec:proof wpd action}, we prove Theorem~\ref{thm:wpd action on quotient}.  Finally, in Section~\ref{sec:examples} we present some examples when $G$ is the mapping class group of a surface.  

\subsection{Acknowledgments} We would like to thank Alessandro Sisto for suggesting that the techniques of his paper with Dahmani and Hagen could apply in our setting as well.  We are immensely grateful to Dan Margalit for initiating our projects on windmills in projection complexes and for ideas, questions, and conversations.  We thank the anonymous referee for reading our paper carefully and for providing useful comments.

The first author is partially supported by the Simons Foundation Grant No.~316383. The second author is supported by National Science Foundation Grant No.~DMS--1812021.  

\section{Projection complexes, Windmills and Pivot Points}\label{sec:prelims}

In this section we provide the definitions of projection complexes, windmills and pivot points.  The majority of the discussion in this section appears in our previous work with Margalit~\cite{un:CMM}.  The essential material that is needed for the sequel is recorded in Lemma~\ref{lem:pivot point facts}.

\subsection{Projection Complexes}\label{sec:projection complex}

We begin with the definition of a projection complex.  Let $\Y$ be a set and let $\theta \geq 0$ be a constant.   Assume that for each $y \in \Y$ there is a function
\[
d_{y} \from \Y \setminus \{y\} \times \Y \setminus \{y\} \to \R_{\geq 0}
\]
with the following properties.

\medskip

\begin{enumerate}[leftmargin=0.5cm,itemsep=.5em]
\item[] {\it Symmetry:} \ $d_{y}(x,z) = d_{y}(z,x)$ for all $x,y,z \in \Y$
\item[] {\it Triangle inequality:} \ $d_{y}(x,z) + d_{y}(z,w) \geq d_{y}(x,w)$ for all $x,y,z,w \in \Y$
\item[] {\it Inequality on triples:} \ $\min \{ d_{y}(x,z), d_{z}(x,y)  \} \leq \theta$ for all $x,y,z \in \Y$
\item[] {\it Finiteness:} \ $\#\{ y \in \Y \mid d_{y}(x,z) > \theta  \}$ is finite  for all $x,z \in \Y$
\end{enumerate}

\medskip 

\noindent These conditions are known as the \emph{projection complex axioms}.  When we say that a set $\Y$ and a collection of functions $\{d_y\}_{y \in \Y}$ as above satisfy the projection complex axioms the constant $\theta$ is implicit.  

For a given $K \geq  0$, we will define a graph $\P_K(\Y)$ with vertices corresponding to the elements in $\Y$.  The edges are defined using the notion of modified distance functions.

Given the functions $\{d_y\}$, Bestvina--Bromberg--Fujiwara \cite{ar:BBF15} constructed another collection of functions $\{d_{y}'\}_{y \in \Y}$, where each $d_y'$ shares the same domain and target as $d_y$.  Because the definition of the $d_y'$ is technical and because we do not use the definition in this paper, we do not state it here.  Bestvina--Bromberg--Fujiwara~\cite[Theorem~3.3B]{ar:BBF15} showed that the modified functions are coarsely equivalent to the original functions: for $x \neq y \neq z \in \Y$, $d'_{y}(x,z) \leq d_{y}(x,z) \leq d'_{y}(x,z) + 2\theta$.

Fix $K \geq 0$.  Then two vertices $x,z$ of $\P_{K}(\Y)$ are connected by an edge if $d'_{y}(x,z) \leq K$ for all $y \in \Y - \{x,z\}$.  Let $d$ denote the resulting path metric on $\P_{K}(\Y)$.  

Bestvina--Bromberg--Fujiwara showed that for $K$ large enough relative to $\theta$, there are constants $\Ke$, $\Kp$, and $\Kg$,  so that the following properties hold (see \cite[Proposition~3.14~and~Lemma 3.18]{ar:BBF15}):

\medskip \noindent {\bf Bounded edge image.} If $x\neq y \neq z$ are vertices of $\P_K(\Y)$ and $d(x,z)=1$, then $d_{y}(x,z) \leq \Ke$.  

\medskip \noindent {\bf Bounded path image.}  If a path in $\P_K(\Y)$ connects vertices $x$ to $z$ without passing through the 2--neighborhood of the vertex $y$, then $d_y(x,z) \leq \Kp$. 

\medskip \noindent {\bf Bounded geodesic image.} If a geodesic in $\P_K(\Y)$ connects vertices $x$ to $z$ without passing through the vertex $y$, then $d_y(x,z) \leq \Kg$. \medskip

(The bounded edge image property follows from the definition of the edges of $\P_K(\Y)$, with $\Ke = K+2\theta$.)  If $K$ is large enough so that the graph $\P_K(\Y)$ satisfies the bounded edge, path, and geodesic properties for some $\Ke$, $\Kp$, and $\Kg$, then we say that $\P_K(\Y)$ is a \emph{projection complex}.  

This is the same definition as we used in our previous work~\cite{un:CMM}.  As mentioned there, we note that our terminology is not standard; in the papers by Bestvina--Bromberg--Fujiwara~\cite{ar:BBF15} and Bestvina--Bromberg--Fujiwara--Sisto~\cite{ar:BBFS20}, every $\P_K(\Y)$ is called a projection complex.  

\p{Group actions on projection complexes.} We say that a group $G$ acts on a projection complex $\P_K(\Y)$ if $G$ acts on the set $\Y$ in such a way that the associated distance functions $d_y$ are $G$--invariant, i.e., $d_{gy}(gx,gz) = d_{y}(x,z)$.  We note that if the original distance functions $d_y$ are $G$--invariant, then the modified distance functions are $G$--invariant as well---as is evident from the definition~\cite[Definition~3.1]{ar:BBF15}---and so the action of $G$ on $\Y$ extends an action of $G$ on the graph $\P_{K}(\Y)$ by simplicial automorphisms.  

\subsection{Windmills}\label{sec:windmills}

To understand the action of $\grp{R_v}$ on $\P$, in our previous work we used the notion of a windmill.  This tool is also necessary in this current work and we review the construction now.

Given an action of a group $G$ on a projection complex $\P$ with an equivariant family of subgroups $\{R_v\}$ of $G$, we can inductively define a sequence of subgraphs $W_i$ of $\P$, a sequence of subsets $\calO_i$ of the set of vertices of $\P$, and a sequence of subgroups $H_i$ of $G$ as follows.  

Let $v_0$ be some base point for $\P$.  To begin the inductive definitions at $i=0$, we define:
\begin{itemize}
\item $H_0 = R_{v_0}$ and
\item $W_0 = \calO_0 =  \{v_0\}$.
\end{itemize}
For $i \geq 1$, we denote by $N_i$ the 1--neighborhood of $W_{i-1}$, we denote by $L_i$ the vertices of $N_i \setminus W_{i-1}$, and we define:
\begin{itemize}
\item $H_i = \grp{R_v \mid v \in N_{i}}$,
\item $W_i = H_i \cdot N_i$, and
\item $\calO_i =$ a set of orbit representatives for the action of $H_{i-1}$ on $L_i$.
\end{itemize}
The set $\{(H_i,W_i,\calO_i)\}_{i=0}^{\infty}$ is called a set of \emph{windmill data} for the equivariant family $\{R_v\}$.  We observe that each $W_i$ is connected.    

The subgroup $H$ of $G$ generated by the $R_v$ is the direct limit of the $H_i$.  Let $\calO$ be the union of the sets of representatives $\calO_i$.  In our previous work with Margalit~\cite[Theorem~1.6]{un:CMM}, we proved the existence of a constant $L(\P)$ such that if $L \geq L(\P)$ and $\{R_v\}$ is an equivariant $L$--spinning family of subgroups then
\[ H \cong \BigFreeProd{v \in \calO} R_v. \]
For the remainder, we will always assume that $L \geq L(\P)$ whenever we are discussing an equivariant $L$--spinning family so that this free product decomposition is valid.  Each of the constants of the form $L_\ast$ defined in the sequel is at least $L(\P)$.

\subsection{Pivot Points}\label{sec:pivot points}

In our previous work, we introduced the notion of the set of pivot points for an element $h$ of $H$ in order to understand the group structure of $H$~\cite{un:CMM}.  We review this notion now and state Lemma~\ref{lem:pivot point facts} which records the necessary technical facts required for the shortening argument in Section~\ref{sec:shortening}.

The \emph{level} of a nontrivial element $h \in H$ is the minimal index $i$ such that $h \in H_i$.  We define the level of the identity element to be $-1$.  

Each $h \in H$ with level $i$ has a \emph{syllable decomposition} $h_1 \cdots h_n$ where each syllable $h_k$ is either a nontrivial element of $H_{i-1}$ or a nontrivial element of $R_{v_k}$ with $v_k \in \calO_i$.  Moreover no two consecutive syllables are of the first type and consecutive syllables $h_k$ and $h_{k+1}$ of the second type have distinct corresponding fixed vertices $v_k$ and $v_{k+1}$.  We refer to $n$ as the \emph{syllable length} of $h$.  As long as $L \geq L(\P)$, which will be our standing assumption, this syllable decomposition is unique for an equivariant $L$--spinning family $\{ R_v \}$.

Let $i \geq 1$ and fix some element $h$  of $H$ with level $i$ and with syllable decomposition $h=h_1\cdots h_n$.  For $k \in \{1,\dots, n\}$ with $h_k \notin H_{i-1}$ and with corresponding fixed vertex $v_k$ we define a vertex $w_k$ of $\P$ as follows:
\[
w_k = h_1 \cdots h_{k-1}v_k.
\]
Note that $v_k$ and $w_k$ are not defined for the syllables $h_k$ that lie in $H_{i-1}$.  Let $\Piv(h)$ be the ordered list of points $w_k$, and call these the \emph{pivot points} for $h$. For $h \in H_0$ we define $\Piv(h)$ to be empty.

There are several key properties regarding windmills and pivot points that we recall now.

\begin{lemma}\label{lem:pivot point facts}
Let $\P$ be a projection complex and let $G$ be a group acting on $\P$.  There are constants $\Lwind$ and $m$ with the following properties.  Suppose $L \geq \Lwind$ and suppose $\{R_v\}$ is an equivariant $L$--spinning family of subgroups of $G$. Let $H = \grp{R_v}$ and choose windmill data $\{(H_i,W_i,\calO_i)\}$.  
\begin{enumerate}
\item If $h$ is an element of $H$ and if $w \in \Piv(h)$, then $d_{w}(v_0,hv_0) > L/2$.
\item If $h$ is an element of $H$ and if $w$, $w'$ are pivot points for $h$ with $w < w'$, then \[d_{w}(v_0,w') > L/2 - \theta \mbox{ and } d_{w'}(v_0,w) \leq \theta.\]
\item For all $i \geq 1$, if $x \in N_{i}$ and $v \notin W_{i-1}$ with $v \neq x$, then $d_v(v_0,x) \leq m$.
\item For all $i \geq 1$, if $h$ has level $i$, then no pivot point for $h$ lies in $W_{i-1}$.   
\end{enumerate}

\begin{proof}
Using the constants associated with $\P$, we set $m = 11\Ke + 6\Kg + 5\Kp$ and $\Lwind = 4(m + \theta) + 1$.  We remark that $m$ is the same constant the proof of Theorem~1.6 in our prior work~\cite{un:CMM} and that $\Lwind \geq L(\P)$ from that same theorem.  The above listed facts follow from results and arguments appearing in the proof of Theorem~1.6 in that paper as we now explain.  

\medskip \noindent {\it Proof of (1).}  Fix an element $h$ in $H$ with syllable decomposition $h = h_1\cdots h_n$.  If the level of $h$ is less than $1$, the statement is vacuous.   Hence suppose that the level of $h$ is at least $1$.  Consider a pivot point $w = h_1 \cdots h_{k-1}v_k$ for $h$.  Equation (1) in the proof of Theorem~1.6 in our prior work states that
\[ d_{w}(v_0,hv_0) \geq d_{w}(v_0,h_kv_0) - 2(m+\theta). \]  As $d_{w}(v_0,h_kv_0) \geq L$ and $L/2 > 2(m+\theta)$, the statement holds.

\medskip \noindent {\it Proof of (2).} Again, fix an element $h$ in $H$ and assume that the level of $h$ is at least $1$ as the statement is vacuous otherwise.  Let $w$ and $w'$ be pivot points for $h$ with $w < w'$.  Statement (B) of the inductive hypothesis in the proof of Theorem~1.6 implies that there is a geodesic from $v_0$ to $w$ avoiding $w'$.  Hence we have $d_{w'}(v_0,w)  \leq \Kg$. Therefore, using the first item, we have
\[d_{w'}(w,hv_0) \geq d_{w'}(v_0,hv_0) - d_{w'}(v_0,w) > L/2 - \Kg > \theta.\]  
Thus by the Inequality on triples, we find $d_{w}(w',hv_0) \leq \theta$.  From this, using the first item again, we conclude
\[ d_{w}(v_0,w') \geq d_{w}(v_0,hv_0) - d_{w}(w',hv_0) > L/2 - \theta. \]
As $d_w(v_0,w') > L/2-\theta > \theta$, by the Inequality on triples, we have $d_{w'}(v_0,w) \leq \theta$.

\medskip \noindent {\it Proof of (3).} This is statement (C) of the inductive hypothesis in the proof of Theorem~1.6.

\medskip \noindent {\it Proof of (4).} Fix $i \geq 1$ and let $h$ be an element of $H$ with level $i$.  The first pivot point for $h$, $w$, lies in $L_i$ by definition.  As $L_i$ is disjoint from $W_{i-1}$, the statement holds for this pivot point.  Let $w'$ be another pivot point for $h$.  By the second item, we have $d_{w}(v_0,w') > L/2 - \theta > m$.  If $w' \in W_{i-1} \subset N_i$, then as $w \notin W_{i-1}$, the third item would imply that $d_{w}(v_0,w') \leq m$.  This is a contradiction, hence $w' \notin W_{i-1}$.    
\end{proof}
\end{lemma}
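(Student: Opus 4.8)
The statement to prove is Lemma~\ref{lem:pivot point facts}, which packages four technical facts about pivot points and windmills. Since the author signals that these all ``follow from results and arguments appearing in the proof of Theorem~1.6'' of the prior work~\cite{un:CMM}, the plan is to extract each item from the right piece of that induction rather than to reprove anything from scratch.

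\medskip

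The plan is as follows. First I would fix the constants: set $m = 11\Ke + 6\Kg + 5\Kp$ to match the constant $m$ appearing in the proof of Theorem~1.6, and set $\Lwind = 4(m+\theta)+1$, noting $\Lwind \geq L(\P)$ so the free product decomposition and unique syllable decomposition are valid. For item~(1), recall that a pivot point has the form $w = h_1\cdots h_{k-1}v_k$ where $h_k \in R_{v_k}$ is a nontrivial syllable; the key input is the inequality (labeled ``Equation~(1)'' in the cited proof) that $d_w(v_0, hv_0) \geq d_w(v_0, h_k v_0) - 2(m+\theta)$, which isolates the contribution of the single syllable $h_k$ from the rest of the word. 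Since $h_k$ is a nontrivial element of $R_{v_k} = R_w$ (by equivariance, $w = h_1\cdots h_{k-1}v_k$ and $h_k$ conjugated appropriately fixes $w$), the $L$--spinning condition gives $d_w(v_0, h_k v_0) \geq L$, and since $L/2 > 2(m+\theta)$ by choice of $\Lwind$, we conclude $d_w(v_0,hv_0) > L/2$.

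\medskip

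For item~(2), I would use statement~(B) of the inductive hypothesis in the cited proof, which yields a geodesic from $v_0$ to the earlier pivot point $w$ avoiding the later pivot point $w'$; the Bounded geodesic image property then gives $d_{w'}(v_0,w) \leq \Kg$. Combining with item~(1) applied at $w'$ (so $d_{w'}(v_0,hv_0) > L/2$) and the triangle inequality for $d_{w'}$ gives $d_{w'}(w, hv_0) > L/2 - \Kg > \theta$; the Inequality on triples applied to the triple $w, w', hv_0$ then forces $d_w(w', hv_0) \leq \theta$, and feeding this back into the triangle inequality at $w$ together with item~(1) gives $d_w(v_0,w') > L/2 - \theta$. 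A final application of the Inequality on triples upgrades the bound $d_{w'}(v_0,w) \leq \Kg$ to the sharper $d_{w'}(v_0,w) \leq \theta$. Item~(3) is simply statement~(C) of that same inductive hypothesis, quoted verbatim. For item~(4), the first pivot point of a level-$i$ element lies in $L_i$ by construction, hence is disjoint from $W_{i-1}$; for any later pivot point $w'$, item~(2) gives $d_w(v_0,w') > L/2 - \theta > m$, so if $w'$ were in $W_{i-1} \subset N_i$ then item~(3) (with $x = w'$, $v = w$, using $w \notin W_{i-1}$) would force $d_w(v_0,w') \leq m$, a contradiction.

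\medskip

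The main obstacle is not any single calculation---each step is short---but the bookkeeping of matching up labels and inductive statements from the companion paper~\cite{un:CMM}: one needs to verify that the constant $m$ there really is $11\Ke + 6\Kg + 5\Kp$, that ``Equation~(1)'' is the stated syllable-isolation bound, and that statements~(B) and~(C) of the induction say what is claimed. Once those citations are pinned down, the inequalities chain together mechanically using only the projection complex axioms (Symmetry, Triangle inequality, Inequality on triples) and the Bounded geodesic image property, plus the numerical slack $L/2 > 2(m+\theta)$ guaranteed by the definition of $\Lwind$.
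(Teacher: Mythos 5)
Your proof follows the paper's argument essentially verbatim: the same choice of constants $m = 11\Ke + 6\Kg + 5\Kp$ and $\Lwind = 4(m+\theta)+1$, the same citations to Equation~(1) and to statements~(B) and~(C) of the induction in the prior work, and the same chains of inequalities (via the triangle inequality, the Inequality on triples, and Bounded geodesic image) for items~(2) and~(4). The only quibble is the parenthetical in your item~(1): by equivariance $R_w = (h_1\cdots h_{k-1})R_{v_k}(h_1\cdots h_{k-1})^{-1}$ rather than $R_{v_k}$ itself, but this does not affect the argument, which matches the paper's.
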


\section{Shortening via pivot points}\label{sec:shortening}

In this section we introduce the key technical tool: \emph{shortening}.  The precise statement is given in Proposition~\ref{prop:shorten}.  This proposition will allow us to bend paths in $\P$ without changing their images in the quotient $\P/\grp{R_v}$.  The bent path has a lower complexity in a precise sense that we will explain.  This will allow us to conclude that certain closed paths in $\P/\grp{R_v}$ lift to closed paths in $\P$.  

Before we can state Proposition~\ref{prop:shorten} we need to alter the notions of level and pivot points so that they are better suited for conjugacy classes.  Assume that $\{R_v\}$ is an equivariant $L$--spinning family of subgroups of $G$. Let $H = \grp{R_v}$ and choose windmill data $\{(H_i,W_i,\calO_i)\}$.        

\ip{Complexity of an element in $H$.}  The \emph{complexity} of an element $h \in H$ is the ordered pair $(\ii(h),\nn(h))$ where $\ii(h)$ is the minimal index of any $H$--conjugate of $h$ and $\nn(h)$ is the minimal syllable length of any $H$--conjugate of $h$ that has level $\ii(h)$.  Lexicographical order on the pair $(\ii(h),\nn(h))$ gives a  weak order on the elements in $H$.  The only element with $\ii(h) = -1$ is the trivial element.  Also, we remark that if $\ii(h) = 0$, then $\nn(h) = 1$.

\ip{Essential pivot points.}  Given an element $h \in H$ with $\ii(h) = i$ and $\nn(h) = n$, we can express $h$ as a reduced word
\[h = g (h_{1}\cdots h_{n})g^{-1}\] 
where each $h_k$ is either a nontrivial element of $H_{i-1}$ or a nontrivial element of $R_{v_k}$ with $v_k \in \calO_i$ and $g \in H$.  If $\nn(h) > 1$, then minimality of $\nn(h)$ implies that if $h_1 \in H_{i-1}$ then $h_n \notin H_{i-1}$, and that if $h_1 \in R_{v_1}$, then $h_n \notin R_{v_1}$. The subset of $\Piv(h)$ corresponding to the syllables $h_k$ that lie in $R_{v_k}$ for some $v_k \in \calO_i$ are called \emph{essential pivot points}.  We denote this subset by $\Pive(h)$.  This set is nonempty so long as $\ii(h) \geq 1$.     

\medskip 

The following lemma, whose proof is an easy exercise from the definitions, justifies calling these pivot points essential.

\begin{lemma}\label{lem:essential facts}
Let $\P$ be a projection complex and let $G$ be a group acting on $\P$.  Suppose $\{R_v\}$ is an equivariant $L$--spinning family of subgroups of $G$.  Let $H  = \grp{R_v}$ and choose windmill data $\{(H_i,W_i,\calO_i)\}$.  The following statements are true.
\begin{enumerate}
\item If the syllable length of $h \in H$ equals $\nn(h)$, then every pivot point is essential, i.e., $\Piv^*(h) = \Piv(h)$.
\item If $h$ and $g$ are elements of $H$, then $\Pive(ghg^{-1}) = g\Pive(h)$.
\item If $h$ is an element of $H$ and $k \geq 1$, then \[\Pive(h^k) = \bigcup_{j=0}^{k-1} h^j\Pive(h)\] as ordered sets. 
\end{enumerate}  
\end{lemma}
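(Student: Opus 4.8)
The plan is to unwind the definitions of the complexity $(\ii(h),\nn(h))$ and of the essential pivot points, proving the three items in the order (1), (2), (3), since the argument for (3) invokes the first two. For (1), if the syllable length of $h$ equals $\nn(h)$ then $h$ already achieves the minimal syllable length at the minimal level, so in the reduced expression $h=g(h_1\cdots h_n)g^{-1}$ defining $\Pive(h)$ one may take $g$ trivial and use $h=h_1\cdots h_n$ as its own syllable decomposition at level $i=\ii(h)$. By construction a pivot point of $h$ is attached to exactly those syllables not lying in $H_{i-1}$, and for a level-$i$ element each such syllable is a nontrivial element of some $R_{v_k}$ with $v_k\in\calO_i$, that is, precisely the syllables indexing the essential pivot points; hence $\Pive(h)=\Piv(h)$.

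For (2), I would use that $\ii$ and $\nn$ are conjugacy invariants by definition, so $\ii(ghg^{-1})=\ii(h)=i$ and $\nn(ghg^{-1})=\nn(h)=n$. If $h=g_0(h_1\cdots h_n)g_0^{-1}$ is a reduced expression realizing $\Pive(h)$, so that the essential pivot points are the ordered points $g_0h_1\cdots h_{k-1}v_k$ over the indices $k$ with $h_k\in R_{v_k}$, then $ghg^{-1}=(gg_0)(h_1\cdots h_n)(gg_0)^{-1}$ is a reduced expression of the same form for $ghg^{-1}$, and it exhibits the essential pivot points $(gg_0)h_1\cdots h_{k-1}v_k=g\cdot(g_0h_1\cdots h_{k-1}v_k)$. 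Since $g$ acts on $\P$ by a simplicial automorphism, it carries the ordered list $\Pive(h)$ onto the ordered list $\Pive(ghg^{-1})$, giving $\Pive(ghg^{-1})=g\Pive(h)$ as ordered sets.

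For (3), write $h=g(h_1\cdots h_n)g^{-1}$ with $n=\nn(h)$ and $i=\ii(h)$. When $n>1$, the minimality conditions recorded after the definition of essential pivot points---that $h_1$ and $h_n$ never lie in a common factor---say exactly that $h_1\cdots h_n$ is cyclically reduced; when $n=1$ the lone syllable must lie in some $R_{v_1}$ with $v_1\in\calO_i$, since placing it in $H_{i-1}$ would force $\ii(h)\le i-1$. In either case $(h_1\cdots h_n)^k$ is a reduced word of syllable length $kn$ at level $i$, and the standard facts that a $k$th power neither lowers the level of a conjugacy class nor changes its minimal syllable length by anything other than the factor $k$---both consequences of the free product decomposition $H\cong\BigFreeProd{v\in\calO}R_v$ and its filtration by the $H_i$---give $\ii(h^k)=i$ and $\nn(h^k)=kn$. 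Thus $h^k=g(h_1\cdots h_n)^kg^{-1}$ is a reduced expression realizing $\Pive(h^k)$; since $(h_1\cdots h_n)^k$ has syllable length $kn=\nn(h^k)$, item (1) gives $\Pive((h_1\cdots h_n)^k)=\Piv((h_1\cdots h_n)^k)$, and reading the prefixes off block by block identifies this ordered list with $\bigcup_{j=0}^{k-1}(h_1\cdots h_n)^j\Pive(h_1\cdots h_n)$, the $j$th block preceding the $(j+1)$st because pivot points are listed in order of appearance of their syllables. Conjugating by $g$ and applying item (2) twice then yields $\Pive(h^k)=\bigcup_{j=0}^{k-1}h^j\Pive(h)$ as ordered sets.

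The only real work is bookkeeping: checking that a reduced expression $h=g(h_1\cdots h_n)g^{-1}$ genuinely produces the essential pivot points $gh_1\cdots h_{k-1}v_k$ as claimed (that is, that no cancellation occurs between $g$, the core, and $g^{-1}$), and confirming the two free-product facts used in (3), that $\ii(h^k)=\ii(h)$ and $\nn(h^k)=k\,\nn(h)$. I expect these, together with tracking the ``as ordered sets'' clauses---which comes down to the observation that the order on pivot points is the order of appearance along the syllable decomposition, hence preserved under conjugation and under powers---to be the main, though still routine, obstacle.
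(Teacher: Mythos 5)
The paper offers no proof of this lemma (it is dismissed as ``an easy exercise from the definitions''), so there is no argument of record to compare against; your write-up is the natural unwinding of the definitions, and items (1) and (2) are correct as you present them, modulo the same well-definedness issues about the expression $h=g(h_1\cdots h_n)g^{-1}$ that the paper itself glosses over.

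There is, however, one step in item (3) that fails as written: the assertion that ``in either case $(h_1\cdots h_n)^k$ is a reduced word of syllable length $kn$ at level $i$'' is false when $n=\nn(h)=1$. In that case the lone syllable $h_1$ lies in some $R_{v_1}$ with $v_1\in\calO_i$, so $h_1^k$ is again a \emph{single} syllable of $R_{v_1}$, and $\ii(h^k)=i$ with $\nn(h^k)=1$, not $k$. The conclusion of (3) still holds there, but by a different (easier) observation: $\Pive(h)=\{gv_1\}$ and $h^j\cdot gv_1=gh_1^jv_1=gv_1$ because $h_1$ fixes $v_1$, so every translate on the right-hand side is the same point and the union collapses to the singleton $\Pive(h^k)=\{gv_1\}$. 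You should split this case off explicitly rather than fold it into the $n\ge 2$ argument. (The truly degenerate subcase $h_1^k=1$, where the left-hand side is empty but the right-hand side is not, is a defect of the lemma as stated when the $R_v$ have torsion, not of your argument.) For $n\ge 2$ your appeal to cyclic reducedness and to the free product structure $H_i\cong H_{i-1}\ast\bigl(\ast_{v\in\calO_i}R_v\bigr)$ to obtain $\ii(h^k)=i$ and $\nn(h^k)=kn$ is correct, and the block-by-block identification of the ordered lists goes through.
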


Items (1) and (2) imply that if the syllable length of $h$ equals $\nn(h)$, then $\Piv^*(ghg^{-1}) = g\Piv(h)$.  We remark that items (2) and (3) of Lemma~\ref{lem:essential facts} are false for the set of all pivot points.  We now state and prove the shortening proposition.

\begin{proposition}\label{prop:shorten}
Let $\P$ be a projection complex and let $G$ be a group acting on $\P$. There is a constant $\Lshort$ with the following properties.  Suppose $L \geq \Lshort$ and suppose $\{R_v\}$ is an equivariant $L$--spinning family of subgroups of $G$.  Let $H = \grp{R_v}$ and choose windmill data $\{(H_i,W_i,\calO_i)\}$.  Let $x$ be a vertex in $\P$ and $h \in H$ such that $hx \neq x$.  Then there exists a vertex $v$ of $\P$ and element $h_{v}$ of $R_{v}$ such that
\begin{enumerate}

\item either $v \in \{x,hx\}$ or $d_{v}(x,hx) > L/10$; and

\item $h_vh < h$.

\end{enumerate}
\end{proposition}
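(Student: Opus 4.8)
The plan is to induct on the complexity $(\ii(h),\nn(h))$ of $h$. The base case is $\ii(h)=0$: then $h$ is conjugate to some nontrivial $h_v \in R_v$ with $v = v_0$, say $h = g h_v g^{-1}$ with $g \in H_0 = R_{v_0}$, so in fact $h \in R_{v_0}$ itself and we may take $v = v_0$, $h_v = h^{-1}$; then $h_v h = 1 < h$, and the dichotomy in (1) is satisfied by $v \in \{x,hx\}$ unless $x \ne v_0 \ne hx$, in which case the spinning hypothesis gives $d_{v_0}(x,hx) = d_{v_0}(x,h^{-1} \cdot hx)$—wait, one should instead observe that if $v_0 \notin \{x,hx\}$ then applying the spinning condition to the nontrivial element $h \in R_{v_0}$ and the pair $x, hx$ is not immediate, so the cleaner route is: if $x \ne v_0$, set $w = x$; the spinning condition says $d_{v_0}(w, h w) \ge L > L/10$ unless $w = v_0$; thus either $x = v_0$ or $hx = v_0$ (making $v_0 \in \{x,hx\}$) or $d_{v_0}(x,hx) \ge L > L/10$. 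This settles $\ii(h) = 0$.

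For the inductive step, suppose $\ii(h) = i \ge 1$ and write $h = g(h_1\cdots h_n)g^{-1}$ as in the definition of essential pivot points, so $\Pive(h) = g\Pive(h_1\cdots h_n)$ is nonempty, with first essential pivot point $w$. The idea is to use $w$ as a hinge. By Lemma~\ref{lem:pivot point facts}(1) (transported by the conjugation identity, or applied in the appropriate windmill after moving $v_0$), $d_w(x_0, h x_0) > L/2$ for a suitable basepoint; the real work is to replace $v_0$ by our given vertex $x$. The strategy is to compare $d_w(x, hx)$ with $d_w(v_0, h v_0)$ using the triangle inequality for $d_w$ and the bounded geodesic image property to control $d_w(v_0, x)$ and $d_w(h v_0, h x) = d_w(v_0,x)$ by some constant depending only on $\P$; choosing $\Lshort$ large enough that $L/2$ dominates twice this constant plus $L/10$ then yields either $w \in \{x,hx\}$ or $d_w(x,hx) > L/10$, which is alternative (1) with $v$ the vertex fixed by the syllable producing $w$. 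For alternative (2): left-multiplying $h$ by the inverse $h_w \in R_{v}$ of the relevant syllable either cancels a syllable (dropping $\nn$) or—when $w$ comes from the innermost structure—drops the level, so in all cases $h_w h$ has strictly smaller complexity; here is where Lemma~\ref{lem:essential facts} and the uniqueness of syllable decompositions are needed to see that the new word is genuinely shorter after cyclic reduction.

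The main obstacle I expect is precisely the interplay between the two conclusions: we must choose the vertex $v$ (equivalently, which pivot point and which conjugate-representative of $h$ to work with) so that \emph{both} the projection lower bound and the complexity drop hold simultaneously. Pinning down $v$ from an \emph{essential} pivot point (rather than an arbitrary one) is what makes (2) work, because Lemma~\ref{lem:essential facts}(1)--(2) guarantees the essential pivot points are exactly the pivot points of a minimal-syllable-length conjugate, so knocking one off is a legitimate complexity reduction; but then re-establishing (1) for that same $v$ against the \emph{given} vertex $x$ (not $v_0$) requires the bounded-geodesic-image estimate above, and one has to be careful that moving the windmill basepoint to accommodate $x$ does not disturb the pivot-point structure. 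A secondary technical point is handling the case $nn(h) = 1$: then there is only one syllable, $h$ is conjugate into a single $R_v$ or into $H_{i-1}$, and one argues as in the base case but at level $i$, using Lemma~\ref{lem:pivot point facts}(4) to know the single essential pivot point avoids $W_{i-1}$.
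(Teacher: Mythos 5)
Your base case is essentially the paper's, up to a slip: from $\ii(h)=0$ you only get that some $H$--conjugate of $h$ lies in $R_{v_0}$, with the conjugator $g$ ranging over all of $H$ (not over $H_0$), so $h\in R_{gv_0}$ and the correct choice is $v=gv_0$, $h_v=gh_1^{-1}g^{-1}$; the spinning estimate then goes through verbatim. The genuine gap is in the inductive step, in establishing conclusion (1) for the \emph{given} vertex $x$. You propose to bound $d_w(v_0,x)$ (and $d_w(hv_0,hx)$, which incidentally equals $d_{h^{-1}w}(v_0,x)$, not $d_w(v_0,x)$) by a constant depending only on $\P$ via the bounded geodesic image property. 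This is false: bounded geodesic image only applies when a geodesic from $v_0$ to $x$ misses $w$, and for arbitrary $x$ there is no such control --- e.g.\ for $x=hv_0$ one has $d_w(v_0,x)>L/2$ by Lemma~\ref{lem:pivot point facts}(1). So no single pivot point of $h$ itself can be guaranteed to satisfy $d_v(x,hx)>L/10$.

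The paper's proof resolves exactly this. When $x$ lies in the windmill $gW_i$ (after translating into $gN_i$), Lemma~\ref{lem:pivot point facts}(3) supplies the bound $d_w(v_0,g^{-1}x)\leq m$ that your sketch wants, and a fixed essential pivot point works. But when $x\notin gW_i$ one must enlarge the candidate set to the bi-infinite family $\{h^jw\}_{j\in\Z}$ of translates of one essential pivot point by \emph{powers of} $h$ (these are essential pivot points of $h^k$, by Lemma~\ref{lem:essential facts}(2)--(3), and are strongly separated by Lemma~\ref{lem:pivot point facts}(2)). The Finiteness axiom forces a transition index $J$ where $d_{h^jw}(h^{j-1}w,x)$ drops from $>\theta$ to $\leq\theta$, and the correct vertex is $v=h^Jw$ or $v=h^{J+1}w$ depending on the size of $d_{h^Jw}(h^{J-1}w,x)$; the element $h_v$ is then the $h^J$-- (or $h^{J+1}$--) conjugate of the syllable inverse, which still yields $h_vh<h$ because $h_vh$ is conjugate to the word with that syllable deleted. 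This ``locate $x$ along the pivot axis'' step is the heart of the proposition and is absent from your proposal; without it, conclusion (1) cannot be achieved for vertices $x$ far from the windmill containing the pivot points.
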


The first item roughly translates as stating that $v$ lies on the geodesic from $x$ to $hx$.

\begin{proof}
Let $\Lwind$ and $m$ be the constants from Lemma~\ref{lem:pivot point facts}.  Set $\Lshort = \max\{\Lwind,5m,14\theta\}$.  Take $L \geq \Lshort$ and suppose that $G$ is acting on $\P$ and that $\{R_v\}$ is an equivariant $L$--spinning family of subgroups of $G$.

Fix a vertex $x$ of $\P$ and an element $h$ of $H$ such that $hx \neq x$.  Let $i = \ii(h)$, $n = \nn(h)$ and express $h$ as a reduced word
\[h = gh_{1}\cdots h_{n}g^{-1}\] where each $h_k$ is either a nontrivial element of $H_{i-1}$ or a nontrivial element of $R_{v_k}$ with $v_k \in \calO_i$.  

First, suppose that $i = 0$ and so $h = gh_1g^{-1}$ where $h_1 \in R_{v_0}$.  In this case, we take $v = gv_0$ and $h_v = gh_1^{-1}g^{-1} \in R_v$.  If $v \notin \{x, hx\}$, then $d_v(x,hx) = d_{v_0}(g^{-1}x,h_1g^{-1}x) \geq L > L/10$.  As $h_v h$ is the identity, clearly $h_v h < h$.  

Hence for the remainder, we assume that $i$ is at least $1$.  In particular, the set $\Pive(h)$ is nonempty.  Our strategy is to find an essential pivot point $w$ for $h$ and an integer $p$ such that $v = h^{p}w$ satisfies the first item.  Given such a pivot point $w = gh_\sigma v_k$, where $h_\sigma = h_1\cdots h_{k-1}$, we take $h_{v} = h^{p}(gh_{\sigma}) h_{k}^{-1}(gh_{\sigma})^{-1}h^{-p} \in R_{v}$.  Then 
\begin{align*}
h_{v}h &= \bigl(h^{p}(gh_{\sigma}) h_{k}^{-1}(gh_{\sigma})^{-1}h^{-p}\bigr)h \\
&= h^{p}\bigl((gh_{\sigma})h_{k}^{-1}(gh_{\sigma})^{-1}h\bigr)h^{-p} \\
&= h^{p}(gh_{1}\cdots h_{k-1}h_{k+1} \cdots h_{n}g^{-1})h^{-p}.
\end{align*}
Hence for this element we have $h_vh < h$, which is the second item.        

If $\{x,hx\} \cap \Pive(h) \neq \emptyset$, we can take $w$ to be an essential pivot point in this intersection and set $v = w$.   

Thus we may assume that $\{ x, hx \} \cap \Pive(h) = \emptyset$.  There are two cases depending on whether $x \in gW_{i}$ or $x \notin gW_{i}$.  Set $\bar{h} = h_1\cdots h_n$ so that $h = g\bar{h}g^{-1}$.  We observe that $\bar{h}$ has level $i$. 

For the first case, we initially assume that $x \in gN_{i} \subset gW_{i}$.  Let $w$ be a pivot point for $\bar{h}$, thus $gw$ is an essential pivot point for $h$.  By Lemma~\ref{lem:pivot point facts}(1), we have that $d_{w}(v_{0},\bar{h}v_{0}) > L/2$.  By Lemma~\ref{lem:pivot point facts}(4), we have that $w \notin W_{i-1}$.  Since $\bar{h}^{-1}w$ is a pivot point for $\bar{h}^{-1}$, Lemma~\ref{lem:pivot point facts}(4) also implies that $\bar{h}^{-1}w \notin W_{i-1}$ as well.  Hence by Lemma~\ref{lem:pivot point facts}(3) as $g^{-1}x \in N_{i}$ and $w, \bar{h}^{-1}w \notin W_{i-1}$ we have that $d_{w}(g^{-1}x,v_{0}) \leq m$ and $d_{w}(\bar{h}g^{-1}x,\bar{h}v_{0}) = d_{\bar{h}^{-1}w}(g^{-1}x,v_{0}) \leq m$.  Therefore
\[ d_{gw}(x,hx) = d_{w}(g^{-1}x,\bar{h}g^{-1}x) \geq d_{w}(v_{0},\bar{h}v_{0}) - d_{w}(v_{0},g^{-1}x) - d_{w}(\bar{h}v_{0},\bar{h}g^{-1}x) > L/2 - 2m \geq L/10. \]     
Hence we may set $v = gw$.

Now suppose that $x \in gW_{i} - gN_{i}$.  Then there is an $h_0 \in H_i$ such that $h_0x \in gN_{i}$.  Let $h' = h_0 h h_0^{-1}$ and $x' = h_0 x$.  We have $h'x' \neq x'$.  Fix some pivot point $w$ for $\bar{h}$ and so $gw$ is an essential pivot point  for $h$.  By Lemma~\ref{lem:essential facts}(1), we have $h_0gw \in \Pive(h')$.  As $x,hx \notin \Pive(h)$, we have that $h_0gw \neq x',h'x'$.  Thus as $x' \in gN_{i}$, the above case applies and we have that
\[ d_{gw}(x,hx) = d_{h_0gw}(x',h'x') > L/10. \]
Hence we may set $v = gw$.

Lastly, we deal with the second case that $x \notin gW_{i}$.  In this case, we will be considering the projection of $x$ to various points of the form $h^jw$ where $w$ is an essential pivot point for $h$ and $j$ is an integer.  As $w$ lies in $gW_i$ by definition and $W_i$ is $H_i$--invariant, we have that $h^jw$ lies in $gW_{i}$.  In particular, $x \neq h^jw$ for any essential pivot point $w$ for $h$ and any integer and therefore projections of $x$ to such points are always defined.    

Fix any essential pivot point $w$ for $h$.  By Lemma~\ref{lem:essential facts}(2) we have that $h^{j}w$ is an essential pivot point for $h^{k}$ whenever $0 \leq j < k$ and additionally, such points are ordered $h^{j_1}w < h^{j_2}w$ if $j_1 < j_2$.  By Lemma~\ref{lem:pivot point facts}(2), we have that for $1 \leq j_{1} < j_{2}$ that
\[d_{h^{j_{1}}w}(w,h^{j_{2}}w) \geq d_{h^{j_{1}}w}(v_{0},h^{j_{2}}w) - d_{h^{j_{1}}w}(v_{0},w) \geq L/2 - 2\theta.\]
By a similar argument we have $d_{h^{j_{1}}w}(h^{j_{0}}w,h^{j_{2}}w) \geq L/2 - 2\theta$ for all integers $j_{0} < j_{1} < j_{2}$.

\medskip \noindent {\it Claim.} There is an integer $J$ such that $d_{h^{j}w}(h^{j-1}w,x) > \theta$ for $j \leq J$ and $d_{h^{j}w}(h^{j-1}w,x) \leq \theta$ for all $j > J$. 

\medskip

\noindent We first show that the set $\{ j \in \Z \mid d_{h^{j}w}(h^{j-1}w,x) \leq \theta \}$ has the form $(J,+\infty)$ for some $J \in \Z \cup \{-\infty,+\infty\}$.  To this end, we suppose that $d_{h^{j}w}(h^{j-1}w,x) \leq \theta$.  If $d_{h^{j+1}w}(h^{j}w,x) > \theta$, then by the Inequality on triples we have $d_{h^{j}w}(h^{j+1}w,x) \leq \theta$.  In this case we find that
\[ L/2 - 2\theta \leq d_{h^{j}w}(h^{j-1}w,h^{j+1}w) \leq d_{h^{j}w}(h^{j-1}w,x) + d_{h^{j}w}(h^{j+1}w,x) \leq 2\theta. \]
This is a contradiction as $L > 8\theta$ and therefore $d_{h^{j+1}w}(h^{j}w,x) \leq \theta$ too.  

If $J = -\infty$, then $d_{h^jw}(h^{j-1}w,x) \leq \theta$ for all integers $j$.  Thus for all $j \leq -1$ we find that
\begin{align*}
d_{h^{j}w}(w,x) \geq d_{h^{j}w}(h^{j-1}w,w) - d_{h^{j}w}(h^{j-1}w,x) \geq L/2 - 3\theta > \theta.
\end{align*}
This contradicts the Finiteness axiom.

If $J = +\infty$, then by the Inequality on Triples we have $d_{h^{j}w}(h^{j+1}w,x) \leq \theta$ for all integers $j$.  Thus for all $j \geq 1$ we find that
\begin{align*}
d_{h^{j}w}(w,x) \geq d_{h^{j}w}(h^{j+1}w,w) - d_{h^{j}w}(h^{j+1}w,x) \geq L/2 - 3\theta > \theta.
\end{align*}
Again, this contradicts the Finiteness axiom.  This completes the proof of the claim.

\medskip 

Let $J$ be as defined in the Claim.  To complete the proof of the proposition, there are two cases based on $d_{h^Jw}(h^{J-1}w,x)$. We will show that we can take $v$ to be either $h^Jw$ or $h^{J+1}w$.

First, suppose that $d_{h^{J}w}(h^{J-1}w,x) \leq L/4$.  We have $d_{h^{J}w}(h^{J-1}w,x) > \theta$ and by the Inequality on triples and invariance we have $d_{h^{J}w}(h^{J+1}w,hx) = d_{h^{J-1}w}(h^{J}w,x) \leq \theta$.
Thus
\begin{align*} 
d_{h^{J}w}(x,hx) &\geq d_{h^{J}w}(h^{J-1}w,h^{J+1}w) - d_{h^{J}w}(h^{J-1}w,x) - d_{h^{J}w}(h^{J+1}w,hx) \\
& \geq L/2 - \theta - L/4 - \theta \geq L/4 - 2\theta > L/10.
\end{align*}
Hence we can set $v = h^{J}w$.

Else, we have that $d_{h^{J}w}(h^{J-1}w,x) = d_{h^{J+1}w}(h^{J}w,hx) > L/4$.  As $d_{h^{J+1}}(h^Jw,x) \leq \theta$ we have
\begin{align*} 
d_{h^{J+1}w}(x,hx) &\geq d_{h^{J+1}w}(h^{J}w,hx) - d_{h^{J+1}w}(h^{J}w,x) \\
& \geq L/4 - \theta > L/10.
\end{align*}
Hence we can set $v = h^{J+1}w$.
\end{proof}

\section{Lifting Quadrilaterals}\label{sec:lifting}

In this section, we apply the shortening argument of Proposition~\ref{prop:shorten} to show that geodesic quadrilaterals in the quotient of the projection complex $\P/\grp{R_v}$ lift to geodesic quadrilaterals in the projection complex $\P$.  This is stated in Proposition~\ref{prop:lift quad}.  As mentioned in the Introduction, the strategy to show that $\P/\grp{R_v}$ is $\delta$--hyperbolic is to lift geodesic triangles in $\P/\grp{R_v}$ to geodesic triangles in $\P$.  As a triangle is a degenerate quadrilateral where one side has length 0, Proposition~\ref{prop:lift quad} applies to geodesic triangles as well.  The reason we work with quadrilaterals is to show that the action of $G/\grp{R_v}$ on $\P/\grp{R_v}$ is a non-elementary WPD action, so long as the action of $G$ on $\P$ is and $L$, the spinning constant, is large enough.

There are two items we need to discuss before stating and proving Proposition~\ref{prop:lift quad}.

\ip{Lifting geodesics.} Throughout this section we will be lifting geodesics from $\P/\grp{R_v}$ to $\P$ and modifying the lifts.  It will be important to have a way of certifying that these lifts and their modifications are geodesics.  This is the content of the following lemma.  Throughout the rest of the paper, we will always assume that paths are 1--Lipschitz. 

\begin{lemma}\label{lem:lift geo}
Let $\P$ be a projection complex and let $G$ be a group acting on $\P$.  Suppose that $H$ is a subgroup of $G$ and let $p \from \P \to \P/H$ be the quotient map.  The following statements are true.
\begin{enumerate}
\item If $\bar{\alpha} \from [0,n] \to \P/H$ is a path and $x$ is a point in $\P$ that satisfies $p(x) = \bar{\alpha}(0)$, then there exists a path $\alpha \from [0,n] \to \P$ such that $p \circ \alpha = \bar{\alpha}$ and $\alpha(0) = x$.  
\item If $\alpha \from [0,n] \to \P$ is a path and $n = d_{\P/H}(p(\alpha(0)),p(\alpha(n)))$, then $\alpha$ is a geodesic.
\end{enumerate} 
\end{lemma}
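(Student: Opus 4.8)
The plan is to prove the two statements separately; both are essentially formal consequences of the fact that $p$ is a simplicial quotient map and a local isometry away from the fixed-point structure, but some care is needed because vertices may have nontrivial stabilizer in $H$.

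For part (1), I would build the lift $\alpha$ inductively over the integer subdivision $0,1,\dots,n$ of $[0,n]$. Since $\bar\alpha$ is a $1$--Lipschitz path in the graph $\P/H$, each restriction $\bar\alpha|_{[k,k+1]}$ is either constant at a vertex or traverses a single edge of $\P/H$ (possibly back and forth), so it suffices to lift edge-by-edge. The key point is that the quotient map $p\from\P\to\P/H$ is a graph morphism which is surjective on the stars of vertices: given a vertex $\tilde u\in\P$ with $p(\tilde u)=u$ and an edge of $\P/H$ at $u$ with other endpoint $u'$, there is an edge of $\P$ at $\tilde u$ mapping onto it, because $H$ acts on $\P$ by simplicial automorphisms and edges of $\P/H$ are by definition images of edges of $\P$; any preimage edge can be translated by $\Stab_H(\tilde u)$ to have $\tilde u$ as an endpoint. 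So, having lifted $\bar\alpha|_{[0,k]}$ to a path ending at some $\tilde x_k$ with $p(\tilde x_k)=\bar\alpha(k)$, I choose a lift of the next edge (or the constant path) starting at $\tilde x_k$, and concatenate. Starting the induction at $\alpha(0)=x$ handles the base case. Continuity and the $1$--Lipschitz property of $\alpha$ are immediate from the construction.

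For part (2), the argument is a one-line observation once set up correctly. The quotient map $p$ is $1$--Lipschitz, since it sends edges of $\P$ to edges (or to vertices) of $\P/H$; hence $d_{\P/H}(p(\alpha(0)),p(\alpha(n)))\le \mathrm{length}(\alpha)=n$. On the other hand $\alpha$ itself is a path of length $n$ in $\P$ joining $\alpha(0)$ to $\alpha(n)$, so $d_\P(\alpha(0),\alpha(n))\le n$. Combining with the hypothesis $n=d_{\P/H}(p(\alpha(0)),p(\alpha(n)))$ and applying the $1$--Lipschitz bound again to any geodesic from $\alpha(0)$ to $\alpha(n)$ in $\P$, we get $n=d_{\P/H}(p(\alpha(0)),p(\alpha(n)))\le d_\P(\alpha(0),\alpha(n))\le n$, forcing $d_\P(\alpha(0),\alpha(n))=n=\mathrm{length}(\alpha)$; therefore $\alpha$ is a geodesic in $\P$.

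The only real subtlety — the ``main obstacle,'' though it is mild — is making precise that edges of $\P/H$ are exactly the $H$--orbits of edges of $\P$ and that the star of a vertex surjects onto the star of its image, so that the edge-by-edge lifting in part (1) never gets stuck; this uses that $H$ acts by simplicial automorphisms (recorded in Section~\ref{sec:projection complex}) together with the elementary fact that, given any preimage edge of a given edge at $u$, translating by an element of $\Stab_H(\tilde u)$ places an endpoint at $\tilde u$. Everything else is bookkeeping with $1$--Lipschitz maps.
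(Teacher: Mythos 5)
Your proposal is correct and follows essentially the same route as the paper: the paper dismisses part (1) as obvious (your edge-by-edge lifting using that $H$ acts by simplicial automorphisms is exactly the implicit argument), and for part (2) the paper runs the same $1$--Lipschitz comparison, phrased as a contradiction with a shorter geodesic rather than as your direct inequality chain.
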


\begin{proof}
The first statement is obvious.

The second statement follows as the map $p \from \P \to \P/H$ is 1--Lipschitz.  Indeed, if $\alpha$ is not a geodesic, then there is a geodesic $\alpha' \from [0,n'] \to \P$ where $\alpha'(0) = \alpha(0)$, $\alpha'(n') = \alpha(n)$ and $n' < n$.  As $p$ is 1--Lipschitz, we find
\[ n = d_{\P/H}(p(\alpha'(0)),p(\alpha'(n'))) \leq n'. \]  This a contradiction and hence $\alpha$ is a geodesic.
\end{proof}

\ip{Bending paths.}  Let $v$ be a vertex in $\P$.  Suppose $\alpha \from [0,n] \to \P$ is a path and that $v = \alpha(n_0)$ for some $n_0 \in \{0,\ldots,n\}$.  For any $h_v \in R_v$ we define a new path $\alpha \vee_v h_v \from [0,n] \to \P$ by
\begin{equation*}
\bigl(\alpha \vee_v h_v\bigr)(t) = \begin{cases}
\alpha(t) & \mbox{ if } 0 \leq t \leq n_0, \mbox{ or} \\
h_v\alpha(t) & \mbox{ if } n_0 \leq t \leq n.
\end{cases}
\end{equation*}
As $\alpha(n_0) = h_v\alpha(n_0)$, this does define a path.  We say that $\alpha \vee_v h_v$ is obtained by \emph{bending $\alpha$ at $v$ using $h_v$}.  Writing $\alpha$ as the concatenation of two paths $\alpha_1$ and $\alpha_2$ where $\alpha_1$ ends at $v$ and $\alpha_2$ begins at $v$, the bent path $\alpha \vee_v h_v$ is the concatenation of $\alpha_1$ and $h_v\alpha_2$.  See Figure~\ref{fig:bend}.

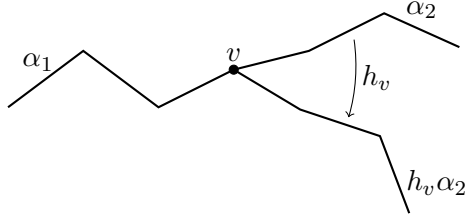
\begin{figure}[ht]
\centering
\begin{tikzpicture}
\draw[thick] (-3,-0.5) -- (-2,0.25) -- (-1,-0.5) -- (0,0);
\draw[thick] (0,0) -- (1,0.25) -- (2,0.75) -- (3,0.3);
\draw[thick,rotate=-45] (0,0) -- (1,0.25) -- (2,0.75) -- (3,0.3);
\draw[->] (1.6,0.4) arc (10:-20:1.6cm and 2cm);
\filldraw (0,0) circle (0.06);
\node at (0,0.2) {$v$}; 
\node at (-2.6,0.1) {$\alpha_1$};
\node at (2.5,0.8) {$\alpha_2$};
\node at (2.7,-1.5) {$h_v\alpha_2$};
\node at (1.9,-0.1) {$h_v$};
\end{tikzpicture}
\caption{The paths $\alpha$ and $\alpha \vee_v h_v$}\label{fig:bend}
\end{figure}

\begin{lemma}\label{lem:bending}
Let $\P$ be a projection complex and let $G$ be a group acting on $\P$. Suppose $\{R_v\}$ is an equivariant family of subgroups of $G$.  Let $H = \grp{R_v}$ and let $p \from \P \to \P/H$ be the quotient map.  Let $\alpha \from [0,n] \to \P$ be a path and let $v$ be a vertex in the image of $\alpha$.  Then for any $h_v \in R_v$  the following statements are true.
\begin{enumerate}
\item We have $p \circ \alpha = p \circ \bigl(\alpha \vee_v h_v \bigr)$.
 
\item For any $0 \leq t_1 < t_2 \leq n$, if $p \circ \alpha|[t_1,t_2]$ is a geodesic, then so is $\bigl(\alpha \vee_v h_v \bigr)|[t_1,t_2]$.
\end{enumerate} 
\end{lemma}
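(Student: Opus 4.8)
The plan is to prove the two statements of Lemma~\ref{lem:bending} directly from the definition of $\alpha \vee_v h_v$, using that $h_v \in R_v \leq H$ so that $h_v$ acts trivially on the quotient $\P/H$. Write $\alpha$ as the concatenation $\alpha_1 * \alpha_2$, where $\alpha_1 = \alpha|[0,n_0]$ ends at $v = \alpha(n_0)$ and $\alpha_2 = \alpha|[n_0,n]$ begins at $v$; then $\alpha \vee_v h_v$ is the concatenation $\alpha_1 * (h_v\alpha_2)$.

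For item (1): since $h_v \in H$, the quotient map $p$ satisfies $p(h_v y) = p(y)$ for every $y \in \P$. On $[0,n_0]$ we have $(\alpha \vee_v h_v)(t) = \alpha(t)$, so $p\circ(\alpha\vee_v h_v) = p\circ\alpha$ there; on $[n_0,n]$ we have $(\alpha \vee_v h_v)(t) = h_v\alpha(t)$, so $p((\alpha\vee_v h_v)(t)) = p(h_v\alpha(t)) = p(\alpha(t))$. Hence the two composites agree on all of $[0,n]$.

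For item (2): fix $0 \le t_1 < t_2 \le n$ and suppose $p\circ\alpha|[t_1,t_2]$ is a geodesic of length $t_2 - t_1$ in $\P/H$. By item (1), $p\circ(\alpha\vee_v h_v)|[t_1,t_2]$ is the same path, hence also a geodesic in $\P/H$ from $p((\alpha\vee_v h_v)(t_1))$ to $p((\alpha\vee_v h_v)(t_2))$, so $d_{\P/H}\bigl(p((\alpha\vee_v h_v)(t_1)),\, p((\alpha\vee_v h_v)(t_2))\bigr) = t_2 - t_1$. Now apply Lemma~\ref{lem:lift geo}(2) to the path $(\alpha\vee_v h_v)|[t_1,t_2]$ (reparametrized on $[0, t_2-t_1]$): since its length equals the quotient-distance between the images of its endpoints, it is a geodesic in $\P$.

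I do not expect any genuine obstacle here; both statements are formal consequences of $h_v$ acting trivially on $\P/H$ together with the already-proved Lemma~\ref{lem:lift geo}. The only point requiring a word of care is that $\alpha\vee_v h_v$ is indeed a well-defined $1$--Lipschitz path — this is the remark ``as $\alpha(n_0) = h_v\alpha(n_0)$, this does define a path'' preceding the lemma, and $1$--Lipschitz-ness on each of the two pieces follows because $h_v$ acts by an isometry — so I would simply invoke that and move on.
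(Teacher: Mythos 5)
Your proof is correct and follows exactly the paper's route: item (1) is immediate from $h_v \in H$ acting trivially on $\P/H$, and item (2) combines item (1) with Lemma~\ref{lem:lift geo}(2); you have simply spelled out the details the paper leaves implicit.
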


\begin{proof}
The first statement follows immediately from the definitions.

The second statement follows from the first statement and Lemma~\ref{lem:lift geo}(2).
\end{proof}

Let $X$ be a graph considered as a metric space where every edge has length one. 
A \emph{geodesic quadrilateral} $Q$ in $X$ consists of four geodesics and four points: $\alpha_k$ from $x_k$ to $x_{k+1 \mod 4}$ for $k = 0,1,2,3$.  We write $Q = \cup_{k=0}^3 \alpha_k$.

\begin{proposition}\label{prop:lift quad}
Let $\P$ be a projection complex and let $G$ be a group acting on $\P$.  For any $B \geq 0$, there is a constant $\Llift(B)$ with the following properties.  Suppose $L \geq \Llift(B)$ and suppose $\{R_v\}$ is an equivariant $L$--spinning family of subgroups of $G$.  Let $H = \grp{R_v}$ and let $p \from \P \to \P/H$ be the quotient map.  For each geodesic quadrilateral $\bar{Q} = \cup_{k=0}^3 \bar{\alpha}_k$ in $\P/H$ there exists a geodesic quadrilateral $Q = \cup_{k=0}^3 \alpha_k$ in $\P$ so that $p(\alpha_k) = \bar{\alpha}_k$ for $k = 0,1,2,3$.  

Additionally, $Q$ satisfies the following property.  If there are lifts $\tilde\alpha_0$ from $\tilde x_0$ to $\tilde x_1$ and $\tilde\alpha_2$ from $\tilde x_2$ to $\tilde x_3$ of $\bar{\alpha}_0$ and $\bar{\alpha}_2$ respectively such that $d_v(\tilde x_0,\tilde x_1) \leq B$ and $d_v(\tilde x_2,\tilde x_3) \leq B$ when defined, then the geodesics $\alpha_0$ and $\alpha_2$ in $Q$ are $H$--translates of $\tilde\alpha_0$ and $\tilde \alpha_2$ respectively.
\end{proposition}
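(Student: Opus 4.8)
\emph{Proof proposal.} The plan is to lift the four sides of $\bar Q$ one after another to a path $\alpha$ in $\P$ running from a point $x$ to a point $hx$ with $h\in H:=\grp{R_v}$, and then to apply Proposition~\ref{prop:shorten} together with the path-bending operation repeatedly so as to strictly decrease the complexity of $h$ until $hx=x$, at which point $\alpha$ closes up into the required geodesic quadrilateral. I would set $\Llift(B)=\max\{\Lshort,\,20B+20\Kg+1\}$, so that $L/10>2B+2\Kg$ and Proposition~\ref{prop:shorten} is available. If lifts $\tilde\alpha_0$ from $\tilde x_0$ to $\tilde x_1$ and $\tilde\alpha_2$ from $\tilde x_2$ to $\tilde x_3$ with the stated bound exist, fix them (any two lifts with this bound differ by an element of $H$, so the choice is immaterial); otherwise choose arbitrary lifts, in which case the additional clause is vacuous. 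Starting at $\tilde x_1$, use Lemma~\ref{lem:lift geo}(1) to lift $\bar\alpha_1$ to a path $\alpha_1$ ending at some $x_2'\in p^{-1}(\bar x_2)$; writing $x_2'=g\tilde x_2$ with $g\in H$, use $g\tilde\alpha_2$ as the third side; then lift $\bar\alpha_3$ from $g\tilde x_3$ to a path $\alpha_3$ ending at some $x_0''=h\tilde x_0$. The concatenation $\alpha=\tilde\alpha_0\cdot\alpha_1\cdot g\tilde\alpha_2\cdot\alpha_3$ runs from $x:=\tilde x_0$ to $hx$, projects to $\bar Q$, and by Lemma~\ref{lem:lift geo}(2) each of its four sides is a geodesic; its first and third sides are $H$--translates of $\tilde\alpha_0$ and $\tilde\alpha_2$.

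I would then run the following loop, maintaining the invariants that $\alpha$ projects to $\bar Q$ with all four sides geodesic, that $\alpha$ ends at $hx$, and that the first and third sides of $\alpha$ are $H$--translates of $\tilde\alpha_0$ and $\tilde\alpha_2$ (note $x=\tilde x_0$ stays fixed throughout). If $hx=x$, stop. Otherwise apply Proposition~\ref{prop:shorten} to $(x,h)$, obtaining $v$ and $h_v\in R_v$ with $h_vh<h$ and with either $v\in\{x,hx\}$ or $d_v(x,hx)>L/10$. If $v=x$, replace $\alpha$ by $\alpha\vee_x h_v=h_v\alpha$ and $h$ by $h_vh$: every side is translated by $h_v\in R_x$, so the $H$--translate property persists, $\alpha$ now ends at $(h_vh)x$, and by Lemma~\ref{lem:bending} the sides remain geodesics. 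If $v=hx$, leave $\alpha$ unchanged and replace $h$ by $h_vh$: since $h_v$ fixes $hx$ we still have $\alpha$ ending at $(h_vh)x=hx$, and nothing else changes. If instead $d_v(x,hx)>L/10$ with $v\notin\{x,hx\}$, I first claim $v$ is a vertex of $\alpha_1$ or of $\alpha_3$: if not, the Bounded geodesic image property bounds $d_v$ of the endpoints of $\alpha_1$ and of $\alpha_3$ by $\Kg$, while $G$--invariance of the projection functions together with the hypothesis bounds $d_v$ of the endpoints of the first and third sides by $B$, so the triangle inequality gives $d_v(x,hx)\le 2B+2\Kg<L/10$, a contradiction. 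Bending $\alpha$ at such an occurrence of $v$ on $\alpha_1$ or $\alpha_3$ using $h_v$ leaves the first side unchanged and the third side either unchanged or translated by $h_v$, so the invariants persist; replace $h$ by $h_vh$.

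In every case the complexity of $h$ strictly decreases (Proposition~\ref{prop:shorten}(2)); since complexity takes values in the well-founded lexicographic order, the loop terminates, and since Proposition~\ref{prop:shorten} applies whenever $hx\ne x$, it terminates with $hx=x$. Then $\alpha$ is a closed loop whose four sides $\alpha_0,\alpha_1,\alpha_2,\alpha_3$ are geodesics with $p(\alpha_k)=\bar\alpha_k$, i.e.\ a geodesic quadrilateral $Q$ in $\P$ projecting to $\bar Q$; and by the maintained invariant $\alpha_0$ and $\alpha_2$ are $H$--translates of $\tilde\alpha_0$ and $\tilde\alpha_2$. Taking the degenerate case ($L$ small, no special lifts, or $\bar Q$ a triangle) into account only simplifies the argument, since then one bends on any of the four sides.

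The step I expect to be the main obstacle is the third case of the loop: showing that the vertex returned by the shortening proposition necessarily lands on one of the two ``uncontrolled'' sides $\alpha_1,\alpha_3$, so that bending there does not disturb the sides that must remain $H$--translates. This is precisely where the dependence of $\Llift$ on $B$ is needed, and it is also where one must handle the bookkeeping of degenerate coincidences --- among the vertices $\bar x_k$, or between $v$ and a side endpoint --- which can make individual $d_v(\cdot,\cdot)$ undefined; in each such case the offending vertex already lies on the relevant side, so the estimate and the conclusion survive with only trivial modifications.
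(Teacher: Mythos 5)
Your proposal is correct and follows essentially the same route as the paper's proof: lift the four sides so that the controlled sides are translates of $\tilde\alpha_0,\tilde\alpha_2$, apply Proposition~\ref{prop:shorten} to the holonomy $h$, and use the $B$--bound on sides $0$ and $2$ together with the bounded geodesic image property to force the bending vertex onto $\alpha_1$ or $\alpha_3$; the paper organizes termination by choosing a special lift whose holonomy has minimal complexity and deriving a contradiction rather than running an explicit loop, but the two are interchangeable since both rest on the well-foundedness of the complexity order. The one quantitative slip is your constant: $\max\{\Lshort,\,20B+20\Kg+1\}$ gives $L/10>2B+2\Kg$, which suffices when the special lifts exist, but in the unconditional case (no special lifts --- e.g.\ $B=0$, the case used for Theorem~\ref{thm:hyperbolic quotient}) you must still place $v$ on \emph{some} side of $\alpha$, and the four-term estimate there needs $L/10>4\Kg$, which your choice does not guarantee when $B<\Kg$; adding a term such as $40\Kg$ to the maximum (the paper takes $\Llift(B)=\max\{\Lshort,40B,40\Kg\}$) repairs this.
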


\begin{proof}
Fix $B \geq 0$ and set $\Llift(B) = \max\{\Lshort,40B,40\Kg\}$.  Take $L \geq \Llift(B)$ and suppose that $G$ is acting on $\P$ and that $\{R_v\}$ is an equivariant $L$--spinning family of subgroups of $G$.

Let $\bar{x}_{0}, \bar{x}_{1}, \bar{x}_{2}$ and $\bar{x}_{3}$ be the vertices of the geodesic quadrilateral $\bar{Q}$ in $\P/H$.  By Lemma~\ref{lem:lift geo}(1), for any point $x_0 \in p^{-1}(\bar{x}_0)$, we can iteratively lift the geodesics $\bar{\alpha}_k$ to paths $\alpha_k$ from $x_k$ to $x_{k+1}$ where $p(x_k) = \bar{x}_k$.  By Lemma~\ref{lem:lift geo}(2), the paths $\alpha_k$ are geodesics.  If $\alpha_0$ and $\alpha_2$ as in the statement of the proposition exists, then we can ensure that $\alpha_0$ and $\alpha_2$ are $H$--translates of these geodesics.  We denote the concatenation of the paths $\alpha_k$ by $\alpha$ and we say that $\alpha$ is a \emph{special lift} of $\bar{Q}$.  

For each special lift $\alpha$ of $\bar{Q}$, with endpoints denoted $x_0$ and $x_4$, there is an element $\hh(\alpha) \in H$ with minimal complexity such that $x_{4} = \hh(\alpha)x_{0}$.  Let $\alpha$ be a special lift of $\bar{Q}$ so that $\hh(\alpha)$ has minimal complexity among all special lifts of $\bar{Q}$.  

We claim that $x_0 = x_4$, which shows that $\alpha$ defines a geodesic quadrilateral $Q$ as in the statement of the proposition.  Indeed, if not we will show that we can bend $\alpha$ to a new path $\alpha'$ that is a special lift with $\hh(\alpha') < \hh(\alpha)$.  This contradicts the minimality of $\hh(\alpha)$.

To this end, suppose that $x_0 \neq x_4 = \hh(\alpha)x_0$.  Apply Proposition~\ref{prop:shorten} to $x = x_0$ and $h = \hh(\alpha)$ and let $v$ be the corresponding vertex of $\P$ and $h_v \in R_v$ the corresponding element.  We have that $h_v\hh(\alpha) < \hh(\alpha)$.

We claim that $v$ lies in the image of $\alpha$.  Indeed, if $v \notin \{x_0,x_4\}$, then $d_v(x_0,x_4) > L/10$.  If further that $v \notin \{ x_1,x_2,x_3\}$, then by the triangle inequality, we have that $d_v(x_n,x_{n+1}) > L/40$ for some $n$.  As $L/40 \geq \Kg$, there is a $n_0$ such that $\alpha_n(n_0) = v$.  Moreover, as $L/40 \geq B$, if lifts $\tilde\alpha_0$ and $\tilde\alpha_2$ as in the statement of the proposition exists, we must have that $n = 1$ or $n = 3$.  This shows that $v$ lies in the image of $\alpha$.  We consider the path $\alpha' = \alpha \vee_v h_v$.     
  
By Lemma~\ref{lem:bending}(2), $\alpha'$ consists of four geodesic segments $\alpha'_k$ for $k = 0,1,2,3$.  Moreover, we observe that $\alpha'$ is a special lift of $\bar{Q}$ as if lifts $\tilde\alpha_0$ and $\tilde\alpha_2$ as in the statement of the proposition exists, then the segments $\alpha'_0$ and $\alpha'_2$ are $H$--translates the segments $\alpha_0$ and $\alpha_2$ respectively.  Letting $x'_4$ denote the terminal point of $\alpha'$ we find  \[ x'_4 = h_vx_4 = h_v\hh(\alpha)x_0 \] so that $\hh(\alpha') \leq h_v\hh(\alpha) < \hh(\alpha)$.  This contradicts the minimality of $\hh(\alpha)$.
\end{proof}

\section{Proof of Theorem~\ref{thm:hyperbolic quotient}}\label{sec:proof hyperbolic}

In this section we prove the first of the two main results of this paper.  Theorem~\ref{thm:hyperbolic quotient} states that if a group $G$ acts on a projection complex $\P$ then there exists a constant $\Lhyp(\P)$ so that if $L \geq \Lhyp(\P)$ and if $\{R_v\}$ is an equivariant $L$--spinning family of subgroups of $G$ then $\P/\grp{R_v}$ is $\delta$--hyperbolic.  The proof proceeds by showing that geodesic triangles in $\P/\grp{R_v}$ can be lifted to geodesic triangles in $\P$.  

\begin{proof}[Proof of Theorem~\ref{thm:hyperbolic quotient}]
Let $\P$ be a projection complex and set $\Lhyp(\P) = \Llift(0)$.  Bestvina--Bromberg--Fujiwara proved the $\P$ is a quasi-tree~\cite[Theorem~3.16]{ar:BBF15}.  Let $\delta$ be such that $\P$ is $\delta$--hyperbolic.  Take $L \geq \Lhyp(\P)$ and suppose that $G$ is acting on $\P$ and that $\{R_v\}$ is an equivariant $L$--spinning family.  Let $H = \grp{R_v}$.  

Let $\bar{\alpha}_0$, $\bar{\alpha}_1$ and $\bar{\alpha}_2$ be the three sides of a geodesic triangle in $\P/H$.  We set $\bar{\alpha}_3$ to be the trivial path at the endpoint of $\bar{\alpha}_2$.  This gives a (degenerate) geodesic quadrilateral $\bar{Q} = \cup_{k=0}^3 \bar{\alpha}_k$.  By Proposition~\ref{prop:lift quad}, there is a geodesic quadrilateral $Q = \cup_{k=0}^3 \alpha_k$ so that $p(\alpha_k) = \bar{\alpha}_k$ for $k = 0,1,2,3$.  As $\alpha_3$ is a trivial path, $Q$ is in fact a geodesic triangle in $\P$.  

As the map $p \from \P \to \P/H$ is 1--Lipschitz and as $Q$ is $\delta$--thin, the geodesic triangle $\bar{Q}$ is $\delta$--thin as well.  Hence $\P/H$ is $\delta$--hyperbolic.
\end{proof}

\section{Bounded projections}\label{sec:small}

There are two key results in this section.  First, we show that geodesics $\alpha \from [0,n] \to \P$ with bounded projections are mapped by $p$ to geodesics in $\P/\grp{R_v}$.  This appears as Lemma~\ref{lem:project geo}.  The proof of this lemma is very similar to the proof of Proposition~\ref{prop:lift quad} as it involves bending and shortening.  Secondly, we apply Lemma~\ref{lem:project geo} to show that given a WPD element in $G$ where some the orbit os some point has bounded projections, its image in $G/\grp{R_v}$ acts as a WPD element on $\P/\grp{R_v}$.  This appears as Lemma~\ref{lem:still wpd}.  The proof of this lemma uses Proposition~\ref{prop:lift quad}.

\begin{lemma}\label{lem:project geo}
Let $\P$ be a projection complex and let $G$ be a group acting on $\P$.  For any $B \geq 0$, there is a constant $\Lpro(B)$ with the following property.  Suppose $L \geq \Lpro(B)$ and suppose $\{R_v\}$ is an equivariant $L$--spinning family of subgroups of $G$.  Let $H = \grp{R_v}$ and let $p \from \P \to \P/H$ be the quotient map.  If $\alpha \from [0,n] \to \P$ is a geodesic, and $d_v(\alpha(0),\alpha(n))) \leq B$ for all vertices $v$ of $\P$ other than $\alpha(0)$ and $\alpha(n)$, then $p \circ \alpha \from [0,n] \to \P/H$ is a geodesic.
\end{lemma}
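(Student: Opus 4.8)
The statement says a geodesic $\alpha$ in $\P$ with uniformly bounded projections descends to a geodesic $p\circ\alpha$ in $\P/H$. As the paper itself notes, the argument should parallel the proof of Proposition~\ref{prop:lift quad}: set $\Lpro(B)$ at least as large as $\Lshort$ together with suitable multiples of $B$ and $\Kg$ (something like $\Lpro(B) = \max\{\Lshort, 40B, 40\Kg\}$, possibly with a slightly larger coefficient to absorb both endpoints of $\alpha$), then argue by contradiction against a minimal-complexity lift.

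\medskip

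\textbf{Step 1: Set up the contradiction.} Suppose $p\circ\alpha$ is not a geodesic. Then there is a strictly shorter path in $\P/H$ between $p(\alpha(0))$ and $p(\alpha(n))$; let $\bar\beta\from[0,m]\to\P/H$ be a geodesic with $m = d_{\P/H}(p(\alpha(0)),p(\alpha(n))) < n$. Using Lemma~\ref{lem:lift geo}(1), lift $\bar\beta$ to a path $\beta$ in $\P$ starting at $\alpha(0)$; by Lemma~\ref{lem:lift geo}(2), $\beta$ is a geodesic. Its endpoint $\beta(m)$ lies in $p^{-1}(p(\alpha(n)))$, so $\beta(m) = h\,\alpha(n)$ for some $h \in H$. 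Among all such lifts $\beta$ of all geodesics $\bar\beta$ realizing the distance $m$, choose one for which the associated element $h = \hh(\beta) \in H$ (with $\beta(m) = \hh(\beta)\alpha(n)$, $\beta(0)=\alpha(n)$... rather, with $\hh(\beta)\alpha(0)$ fixed as $\alpha(0)$ and $\beta(m)=\hh(\beta)\alpha(n)$) has minimal complexity in the sense of Section~\ref{sec:shortening}. The goal is to show $\hh(\beta)$ can be taken trivial, i.e.\ $\beta(m) = \alpha(n)$; then $\alpha$ and $\beta$ share both endpoints, contradicting $m < n$ since $\alpha$ is a geodesic in $\P$.

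\medskip

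\textbf{Step 2: Shorten using the bounded projections of $\alpha$.} Suppose $\hh(\beta)$ is nontrivial, so $\hh(\beta)\alpha(n) \neq \alpha(n)$ — apply Proposition~\ref{prop:shorten} with $x = \alpha(n)$ (or $x=\alpha(0)$; one must pick the endpoint carefully) and $h = \hh(\beta)$ to get a vertex $v$ and $h_v \in R_v$ with $h_v\hh(\beta) < \hh(\beta)$ and with $v \in \{x, hx\}$ or $d_v(x,hx) > L/10$. One then argues $v$ must lie on $\beta$ or on $\alpha$. If $v$ is not an endpoint of the quadrilateral-like configuration formed by $\alpha$ and $\beta$, then $d_v(\alpha(0),\beta(m)) > L/10$; but the hypothesis gives $d_v(\alpha(0),\alpha(n)) \le B$, and by equivariance $d_v(\hh(\beta)\alpha(0),\hh(\beta)\alpha(n)) = d_{\hh(\beta)^{-1}v}(\alpha(0),\alpha(n)) \le B$ as well (here $\hh(\beta)\alpha(0)=\alpha(0)$ since we normalized so $\alpha(0)$ is fixed — or more honestly we track it through both endpoints). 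Since $L/10 > 2B + \Kg$ (by choice of $\Lpro(B)$), the triangle inequality over the four points $\alpha(0), \alpha(n), \beta(0)=\alpha(0), \beta(m)$ forces $d_v(\text{two consecutive endpoints}) > \Kg/\text{something}$ along $\beta$, and then the Bounded geodesic image property forces $v$ to be a vertex of $\beta$. Bend: set $\beta' = \beta \vee_v h_v$. By Lemma~\ref{lem:bending}, $p\circ\beta' = p\circ\beta = \bar\beta$ and $\beta'$ is still a geodesic, with endpoint $\beta'(m) = h_v\beta(m) = h_v\hh(\beta)\alpha(n)$, so $\hh(\beta') \le h_v\hh(\beta) < \hh(\beta)$ — contradicting minimality. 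Hence $\hh(\beta)$ is trivial and we are done.

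\medskip

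\textbf{The main obstacle.} The delicate point is the bookkeeping of which point gets fixed and which gets translated, and correspondingly how the bounded-projection hypothesis $d_v(\alpha(0),\alpha(n)) \le B$ transfers to translates of $\alpha$'s endpoints under elements of $H$. Because both $\alpha$ and the lift $\beta$ must be compared, one is effectively working with a (degenerate) bigon whose two sides are $\alpha$ and $\beta$, and the shortening proposition is stated for a single pair $x, hx$; the care lies in choosing the normalization (say $\beta(0) = \alpha(0)$, and measuring the defect $h$ at the terminal endpoint) so that (i) Proposition~\ref{prop:shorten} applies cleanly, (ii) the hypothesis on $\alpha$ controls $d_v$ at the shared endpoint, and (iii) when $v$ is proved to lie on $\beta$ rather than $\alpha$, the bending argument genuinely lowers complexity. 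A secondary check is that the special lift $\beta'$ really does fit the framework — this follows because $\beta'(0) = \beta(0) = \alpha(0)$ is unchanged and only the terminal element is altered. Once $\Lpro(B)$ is chosen large enough (at least $\Lshort$, and with $L/10$ exceeding $2B$ plus the relevant multiple of $\Kg$ needed to invoke Bounded geodesic image along a path subdivided into at most a fixed number of geodesic arcs), all the inequalities go through as in Proposition~\ref{prop:lift quad}.
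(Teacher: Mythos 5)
Your overall strategy is the paper's: lift a competing geodesic $\bar\beta$, measure the defect $\hh(\beta)\in H$ at the far endpoint, minimize its complexity, and use Proposition~\ref{prop:shorten} plus bending (Lemma~\ref{lem:bending}) to contradict minimality. The constant you propose also works. But there is one case you do not resolve, and it is exactly the one your ``main obstacle'' paragraph circles around: Proposition~\ref{prop:shorten} applied to $x=\alpha(n)$ and $h=\hh(\beta)$ may return $v=x=\alpha(n)$ itself. In that case the vertex $v$ need not lie on $\beta$ at all (your bounded-geodesic-image argument only places $v$ on $\beta$ when $v\notin\{\alpha(n),\beta(0)\}$), so there is nothing to bend; and the element $h_v\in R_{\alpha(n)}$ does not fix $\alpha(0)$, so $h_v\beta$ is not a lift starting at $\alpha(0)$ and hence is not admissible in your minimization, which ranges only over lifts $\beta$ of $\bar\beta$ with $\beta(0)=\alpha(0)$ fixed.

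The paper closes this case by enlarging the class of competitors: it minimizes the complexity of the defect $\hh(h\alpha,\beta)$ over all $H$--translates $h\alpha$ of $\alpha$ together with all compatible lifts $\beta$ (those with $\beta(0)=h\alpha(0)$). Then, when $v=\alpha(n)$, one translates the entire configuration by $h_v$: since $h_v$ fixes $\alpha(n)$, the pair $(h_v\alpha,\,h_v\beta)$ is again admissible and its defect is $h_v\hh(\alpha,\beta)<\hh(\alpha,\beta)$, contradicting minimality. (The remaining cases $v=\beta(n')$ and $v=\beta(0)=\alpha(0)$ put $v$ on $\beta$ and are handled by bending, as you describe; note that bending at $\beta(0)$ keeps the lift compatible because $h_v$ fixes that vertex.) So the fix is not merely careful bookkeeping within your setup but an actual change to the object being minimized; as written, your proof does not go through in the case $v=\alpha(n)$.
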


\begin{proof}
Set $\Lpro(B) = \max\{\Lshort,10B + 10\Kg\}$.  Take $L \geq \Lpro(B)$ and suppose that $G$ is acting on $\P$ and that $\{R_v\}$ is an equivariant $L$--spinning family.

Let $\bar{\beta} \from [0,n'] \to \P/H$ be a geodesic from $p(\alpha(0))$ to $p(\alpha(n))$.  We will argue that $n = n'$, showing that $p \circ \alpha$ is a geodesic.  

For each $H$--translate $h\alpha \from [0,n] \to \P$ of $\alpha$, we say a lift $\beta\from [0,n'] \to \P$ of $\bar{\beta}$ is \emph{compatible} with $h\alpha$ if $h\alpha(0) = \beta(0)$.  In this situation, there is an element $\hh(h\alpha,\beta)$ with minimal complexity such that $\beta(n') = \hh(h\alpha,\beta)h\alpha(n)$.  We replace $\alpha$ by an $H$--translate and let $\beta \from [0,n'] \to \P$ be a compatible lift of $\bar{\beta}$ so that $\hh(\alpha,\beta)$ minimizes complexity among all $H$--translates of $\alpha$ and compatible lifts.  

We claim that $\alpha(n) = \beta(n')$, which shows that $n = n'$ as both $\alpha$ and $\beta$ are geodesics.  Indeed, if not we will show that we can find a translate $\alpha'$ of $\alpha$ and a compatible lift $\beta'$ with $\hh(\alpha',\beta') < \hh(\alpha,\beta)$.  The path $\beta'$ is obtained by translating or bending $\beta$.  This contradicts the minimality of $\hh(\alpha,\beta)$.  

To this end, suppose that $\alpha(n) \neq \beta(n')$.  Apply Proposition~\ref{prop:shorten} to $x = \alpha(n)$ and $h = \hh(\alpha,\beta)$ and let $v$ be the corresponding vertex and $h_v \in R_v$ the corresponding element.  We have that $h_v \hh(\alpha,\beta) < \hh(\alpha,\beta)$.

There are two cases now depending on $v$. 

If $v = \alpha(n)$, then for the $H$--translate $h_v\alpha$ and compatible lift $h_v\beta$, we have \[h_v\beta(n') = h_v\hh(\alpha,\beta)\alpha(n) = h_v\hh(\alpha,\beta)h_v\alpha(n)\] so that $\hh(h_v\alpha,h_v\beta) \leq h_v\hh(\alpha,\beta) < \hh(\alpha,\beta)$.  This contradicts the minimality of $\hh(\alpha,\beta)$.

Else, we claim that $v$ lies in the image of $\beta$.  Indeed, if $v \neq \beta(n)$ then $d_v(\alpha(n),\beta(n')) > L/10$.  If further $v \neq \beta(0)$, then as $d_v(\alpha(0),\alpha(n)) \leq B$ and $\alpha(0) = \beta(0)$, have have that
\[ d_v(\beta(0),\beta(n')) \geq d_v(\alpha(n),\beta(n')) - d_v(\alpha(0),\alpha(n)) > L/10 - B > \Kg. \]
This shows that $v$ lies in the image of $\beta$.

We define $\beta' = \beta \vee_v h_v$.  By Lemma~\ref{lem:bending}, $\beta'$ is a compatible lift.  Next, we find that
\[ \beta'(n') = h_v\beta(n') = h_v\hh(\alpha,\beta)\alpha(n) \] so that $\hh(\alpha,\beta') \leq \hh(\alpha,\beta) < \hh(\alpha,\beta)$.  This contradicts the minimality of $\hh(\alpha,\beta)$. 
\end{proof}

\begin{lemma}\label{lem:still wpd}
Let $\P$ be a projection complex, let $G$ be group acting on $\P$ and let $B \geq 0$.  Suppose $L \geq \max\{\Llift(B),\Lpro(B)\}$ and suppose $\{R_v\}$ is an equivariant $L$--spinning family of subgroups of $G$.  Let $H = \grp{R_v}$.  If $f \in G$ is a hyperbolic isometry of $\P$ so that $d_v(x_0,f^nx_0) \leq B$ for all $n \in \Z$ when defined, then its image $\bar{f} \in G/H$ is a hyperbolic isometry of $\P/H$.  Additionally, if $f$ is a WPD element, then so is $\bar{f}$.   
\end{lemma}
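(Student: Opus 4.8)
The plan is to obtain both statements from Lemma~\ref{lem:project geo} and Proposition~\ref{prop:lift quad}. First I would record that, since $\{R_v\}$ is equivariant, $H$ is normal in $G$, so $G/H$ acts on $\P/H$ and the quotient map $p$ is equivariant, $p(gy) = \bar g\,p(y)$; in particular $\bar f^{\,n} p(x_0) = p(f^n x_0)$. By the remark following the definition of a WPD element it is enough to check the defining finiteness condition at the single point $\bar x_0 := p(x_0)$. For hyperbolicity: for each $n$ pick a geodesic $\gamma_n$ in $\P$ from $x_0$ to $f^n x_0$; the hypothesis $d_v(x_0,f^n x_0)\le B$ for all $v$ lets Lemma~\ref{lem:project geo} apply, since $L\ge\Lpro(B)$, so $p\circ\gamma_n$ is a geodesic and $d_{\P/H}(\bar x_0,\bar f^{\,n}\bar x_0) = d_\P(x_0,f^n x_0)$. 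Dividing by $n$, the left side has the same limit as $\lim_{n\to\infty} d_\P(x_0,f^n x_0)/n$, which is positive since $f$ is hyperbolic, so $\bar f$ is hyperbolic.

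For the WPD claim, fix $D\ge 0$. Since $f$ is a WPD element there is $M\ge 0$ for which $S := \{g\in G \mid d_\P(x_0,gx_0)\le D,\ d_\P(f^M x_0, g f^M x_0)\le D\}$ is finite; I claim this same $M$ works for $\bar f$. Take $\bar g\in G/H$ with $d_{\P/H}(\bar x_0,\bar g\bar x_0)\le D$ and $d_{\P/H}(\bar f^M\bar x_0,\bar g\bar f^M\bar x_0)\le D$ and choose a lift $g$. Consider the geodesic quadrilateral $\bar Q = \cup_{k=0}^3\bar\alpha_k$ in $\P/H$ with ordered vertices $p(x_0)$, $p(f^M x_0)$, $p(g f^M x_0)$, $p(g x_0)$, where $\bar\alpha_0$ and $\bar\alpha_2$ are the images under $p$ of geodesics $\tilde\alpha_0$ from $x_0$ to $f^M x_0$ and $\tilde\alpha_2$ from $g f^M x_0$ to $g x_0$ (these images are geodesics by Lemma~\ref{lem:project geo}), and $\bar\alpha_1$, $\bar\alpha_3$ are any geodesics, necessarily of length at most $D$. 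The endpoint pairs of $\tilde\alpha_0$ and $\tilde\alpha_2$ have $d_v$-projections bounded by $B$: for $\tilde\alpha_0$ this is the hypothesis at $n=M$, and for $\tilde\alpha_2$ it follows from $G$-invariance of the $d_v$ and the hypothesis. Since $L\ge\Llift(B)$, Proposition~\ref{prop:lift quad} produces a geodesic quadrilateral $Q = \cup_{k=0}^3\alpha_k$ in $\P$ lifting $\bar Q$ with $\alpha_0$, $\alpha_2$ equal to $H$-translates of $\tilde\alpha_0$, $\tilde\alpha_2$.

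The remaining step is bookkeeping. Translating $Q$ by an element of $H$ I may assume $\alpha_0 = \tilde\alpha_0$; then, using that $\alpha_2$ is an $H$-translate of $\tilde\alpha_2$ and that $\tilde\alpha_2$ runs from $g f^M x_0$ to $g x_0$, the ordered vertices of $Q$ are $x_0$, $f^M x_0$, $h g f^M x_0$, $h g x_0$ for some $h\in H$. The sides $\alpha_1$ and $\alpha_3$ lift $\bar\alpha_1$ and $\bar\alpha_3$, hence have length at most $D$, giving $d_\P(f^M x_0, h g f^M x_0)\le D$ and $d_\P(h g x_0, x_0)\le D$, so $h g\in S$. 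Since $H$ is normal, $h g\in g H$, so $h g$ has image $\bar g$ in $G/H$. As $\bar g$ was arbitrary, the set appearing in the WPD condition for $\bar f$ at $\bar x_0$ with constant $D$ is contained in the image of the finite set $S$ under $G\to G/H$, hence is finite, which together with hyperbolicity shows $\bar f$ is a WPD element. I expect the one genuinely delicate point to be the choice of labelling that makes $\bar\alpha_0$ and $\bar\alpha_2$ the opposite pair of sides admitting bounded-projection lifts, since this is exactly what lets the refinement in Proposition~\ref{prop:lift quad} pin down $\alpha_0$ and $\alpha_2$ tightly enough to recover a coset representative of $\bar g$ inside $S$.
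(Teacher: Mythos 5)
Your proof is correct and follows essentially the same route as the paper: hyperbolicity via Lemma~\ref{lem:project geo}, and the WPD condition by lifting the quadrilateral with vertices $\bar x_0$, $\bar f^M\bar x_0$, $\bar g\bar f^M\bar x_0$, $\bar g\bar x_0$ via Proposition~\ref{prop:lift quad} to obtain a preimage of $\bar g$ lying in the finite set $S$. You are in fact slightly more explicit than the paper on the one delicate point you flag, namely choosing $\bar\alpha_0$ and $\bar\alpha_2$ to be the $p$--images of the specific bounded-projection geodesics so that the ``additionally'' clause of Proposition~\ref{prop:lift quad} applies.
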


\begin{proof}
Fix $B \geq 0$ and suppose that $G$ is acting on $\P$ and that $\{R_v\}$ is an equivariant $L$--spinning family where $L \geq \max\{\Llift(B),\Lpro(B)\}$.  Suppose that $f \in G$ is a hyperbolic isometry of $\P$ and $x_0$ is a vertex of $\P$ so that $d_v(x_0,f^nx_0) \leq B$ for all $n \in \Z$ when defined.
 
Let $\bar{x}_0 = p(x_0)$.  As $L \geq \Lpro(B)$, by Lemma~\ref{lem:project geo}, we have that $d_{\P/H}(\bar{x}_0,\bar{f}^n\bar{x}_0) = d_{\P}(x_0,f^nx_0)$.  Hence as $f$ is hyperbolic, $\bar{f}$ is also hyperbolic.

Now assume further that $f$ is a WPD element.  Fix $D \geq 0$ and let $M \geq 0$ be such that the set
\[ \{ g \in G \mid d_{\P}(x_0,gx_0) \leq D \mbox{ and } d_{\P}(f^Mx_0,gf^Mx_0) \leq D  \} \] is finite.  Let $K$ denote the cardinality of this set.

Suppose that $\{\bar{g}_1, \ldots, \bar{g}_{K'} \}$ is a set of elements of $G/H$ so that 
\[d_{\P/H}(\bar{x}_0,\bar{g}_j\bar{x}_0) \leq D \mbox{ and } d_{\P/H}(\bar{f}^M\bar{x}_0,\bar{g}_j\bar{f}^M\bar{x}_0) \leq D.\]
Fix elements $g_j \in G$ whose images are the $\bar{g}_j$s.

We consider the geodesic quadrilateral $\bar{Q}_j = \cup_{k=0}^3 \bar{\alpha}_k$ where: $\bar{\alpha}_0$ is a geodesic from $\bar{x}_0$ to $\bar{f}^M\bar{x}_0$, $\bar{\alpha}_1$ is a geodesic from $\bar{f}^M\bar{x}_0$ to $\bar{g}_j\bar{f}^M\bar{x}_0$, $\bar{\alpha}_2$ is a geodesic from $\bar{g}_j\bar{f}^M\bar{x}_0$ to $\bar{g}_j\bar{x}_0$, and $\bar{\alpha}_3$ is a geodesic from $\bar{g}_j\bar{x}_0$ to $\bar{x}_0$.    

As $L \geq \Llift(B)$ for each $1 \leq j \leq K'$, there is a geodesic quadrilateral $Q_j = \cup_{k=0}^3 \alpha_k$ so that $p(\alpha_k) = \bar{\alpha}_k$.  Moreover, there are elements $h_0,h_2 \in H$ such that $\alpha_0$ is a geodesic from $h_0\bar{x}_0$ to $h_0f^Mx_0$ and $\alpha_2$ is a geodesic from $h_2g_jf^Mx_0$ to $h_2g_jx_0$.  In particular for each $1 \leq j \leq K'$ we find that
\[d_{\P}(x_0,h_0^{-1}h_2g_j) \leq D \mbox{ and } d_{\P}(f^Mx_0,h_0^{-1}h_2g_jf^M x_0) \leq D. \]
This shows that $K' \leq K$.  

As it suffices to check finiteness at a single point, this shows that $\bar{f}$ is a WPD element.  
\end{proof}

\section{Proof of Theorem~\ref{thm:wpd action on quotient}}\label{sec:proof wpd action}

In this section we give the proof of the second of the main results in this paper.  
Theorem~\ref{thm:wpd action on quotient} states that if a group $G$ admits a non-elementary WPD action on a projection complex $\P$ then there exists a constant $\Lwpd(\P,G)$ so that if $L \geq \Lwpd(\P,G)$ and if $\{R_v\}$ is an equivariant $L$--spinning family of subgroups of $G$ then the action of $G/\grp{R_v}$ on $\P/\grp{R_v}$ is a non-elementary WPD action.

\ip{Isometries have bounded projections.} In order to apply the results of Section~\ref{sec:small}, we need to know that hyperbolic isometries of a projection complex have bounded projections.  This is an application of the Finiteness axiom of a projection complex as we now show.

\begin{lemma}\label{lem:bounded projections}
Let $\P$ be a projection complex and let $f$ be a hyperbolic isometry of $\P$.  Then for any vertex $x_0$ of $\P$, there is a constant $B_f$ such that $d_v(x_0,f^n x_0) \leq B_f$ for all $n \in \Z$ when defined.
\end{lemma}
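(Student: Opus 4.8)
The plan is to exploit the Finiteness axiom directly. Fix a vertex $x_0$ and a hyperbolic isometry $f$. The key observation is that for a vertex $v$ distinct from $x_0$ and all the $f^n x_0$, the projection $d_v(x_0, f^n x_0)$ can only be large for finitely many $v$ if we fix the pair of endpoints, but here the pair of endpoints varies with $n$. So the first step is to reduce to a bounded family of "elementary" projections using the triangle inequality and $G$-invariance (here I mean $\langle f\rangle$-invariance): for any $n$, a geodesic from $x_0$ to $f^n x_0$ in $\P$, broken into unit edges, gives $d_v(x_0, f^n x_0) \le d(x_0,f^nx_0)\cdot \Ke$ by the Bounded edge image property, but that bound is not uniform in $n$. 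Instead I would use the Bounded geodesic image property: if a geodesic from $x_0$ to $f^n x_0$ does not pass through $v$, then $d_v(x_0,f^nx_0) \le \Kg$. So the only $v$ for which $d_v(x_0, f^n x_0)$ could exceed $\Kg$ are those lying on every geodesic from $x_0$ to $f^n x_0$.

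The main idea is then the following. Set $a = d(x_0, fx_0)$, which is some fixed nonnegative integer. For a fixed vertex $v$, I want to bound $d_v(x_0, f^n x_0)$ uniformly in $n$. Using the triangle inequality for $d_v$ and writing the orbit segment from $x_0$ to $f^n x_0$ as a concatenation of translates $f^k x_0$ to $f^{k+1} x_0$, we get
\[
d_v(x_0, f^n x_0) \le \sum_{k=0}^{n-1} d_v(f^k x_0, f^{k+1} x_0) = \sum_{k=0}^{n-1} d_{f^{-k} v}(x_0, f x_0),
\]
using $\langle f\rangle$-invariance of the distance functions. Now by the Finiteness axiom applied to the pair $(x_0, fx_0)$, the set $S = \{ u \in \Y \mid d_u(x_0, f x_0) > \theta \}$ is finite; let $C = \max_{u \in S} d_u(x_0, f x_0)$ (with $C = \theta$ if $S$ is empty). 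Then $d_{f^{-k}v}(x_0, fx_0) > \theta$ only when $f^{-k} v \in S$, i.e., only for those $k$ with $v \in f^k S$; since $S$ is finite and the $f^k S$ are translates, for a fixed $v$ there are at most $\#S$ values of $k \in \mathbb{Z}$ with $v \in f^k S$ (indeed at most $\#S$, since $f$ acts freely on the index set of its orbit — or more carefully, the number of such $k$ is at most $\#S$ because the map $k \mapsto f^{-k}v$ is injective and lands in $S$ for each such $k$). Hence
\[
d_v(x_0, f^n x_0) \le \#S \cdot C + n\theta,
\]
which is still not uniform — the $n\theta$ term is the obstacle.

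To remove the $n\theta$ term I would be more careful: the Inequality on triples gives that for most $u$ in the orbit segment, $d_u(x_0,fx_0) \le \theta$, but summing many $\theta$'s is still unbounded. The correct approach is instead to combine the two facts: any $v$ with $d_v(x_0,f^nx_0) > \Kg$ must lie on a geodesic from $x_0$ to $f^n x_0$, hence within distance $d(x_0,f^nx_0)$ of $x_0$. But I want to avoid geometry of $\P$ and argue purely combinatorially. So I would argue as follows: fix $v$; if $d_v(x_0, f^n x_0)$ is large, then by the triangle inequality there must be \emph{some} $k \in \{0, \dots, n-1\}$ with $d_v(f^k x_0, f^{k+1}x_0) = d_{f^{-k}v}(x_0, fx_0)$ large, specifically $> \theta$ — no, that still only gives one large term. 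Let me instead use: $d_v(x_0,f^nx_0) \le 2\theta + \max\{ d_v(x_0,y),\ d_v(y,f^nx_0)\}$ is not right either. The honest fix is to combine the Bounded geodesic image bound with a pigeonhole on how many times a geodesic can revisit the region near $v$: actually the cleanest route, and the one I expect the authors take, is: the number of vertices $v$ on any geodesic from $x_0$ to $f^nx_0$ with $d_v(x_0,f^nx_0) > \Kg$ is empty (by Bounded geodesic image — the geodesic passes through each of its own vertices!), so in fact the only way $d_v(x_0, f^n x_0) > \Kg$ is if \emph{every} geodesic passes through $v$; and then $d_v(x_0, f^n x_0) \le d_v(x_0, f^k x_0) + d_v(f^k x_0, f^n x_0)$ for the index $k$ where the geodesic "turns at $v$". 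Pushing this through, the uniform bound should come out as roughly $B_f = 2\Kg + 2\Ke + $ (something depending on $\#S$ and $C$ above).

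The main obstacle, then, is precisely this: getting a bound independent of $n$. The triangle-inequality sum is too lossy because of the accumulating $\theta$ terms, so I expect one must instead observe that a vertex $v$ with huge $d_v(x_0, f^n x_0)$ forces $v$ onto a geodesic, invoke Bounded geodesic image to localize it near one orbit point $f^k x_0$, and then bound $d_v(x_0, f^n x_0)$ by $d_v(x_0, f^k x_0) + d_v(f^k x_0, f^n x_0) \le d_v(f^k x_0, x_0) + d_{f^{-k} v}(x_0, f^{n-k} x_0)$ and iterate/bootstrap — but since $v$ is near $f^k x_0$, by the Finiteness/Inequality-on-triples package both of these are controlled once we know $v$ is "close" to the orbit point. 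I would formalize "close" via: among the finitely many $u$ with $d_u(x_0, fx_0) > \theta$, take the worst value $C$; then $v \in f^k S$ for at most $\#S$ values of $k$, and outside those the contribution is $\le\theta$ but such contributions telescope trivially via the Inequality on triples to give a bound of $2\theta$, not $n\theta$. So $B_f := \#S\cdot C + 2\Kg + 2\theta$ (adjusting constants) works. I will present the argument in that order: (1) reduce to bounding one $d_v$ at a time; (2) invoke Finiteness on the pair $(x_0, fx_0)$ to get the finite set $S$ and constant $C$; (3) show $v$ meets only boundedly many translates $f^k S$; (4) handle the "$\le\theta$" indices via the Inequality on triples so they contribute only $O(\theta)$, not $O(n\theta)$; (5) assemble $B_f$.
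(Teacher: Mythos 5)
There is a genuine gap, and you have in fact located it yourself: the triangle-inequality sum $d_v(x_0,f^nx_0)\le\sum_{k=0}^{n-1}d_{f^{-k}v}(x_0,fx_0)$ leaves $n-\#S$ terms each of size up to $\theta$, and your proposed fix --- that these ``telescope trivially via the Inequality on triples to give a bound of $2\theta$'' --- is asserted but never justified, and it is not true. The Inequality on triples compares projections to two \emph{different} vertices ($\min\{d_y(x,z),d_z(x,y)\}\le\theta$); it provides no cancellation mechanism inside a sum of projections to the \emph{same} vertex $v$, so the $O(n\theta)$ term survives and the bound is not uniform in $n$. A secondary issue: your claim that $k\mapsto f^{-k}v$ is injective, hence that at most $\#S$ indices contribute more than $\theta$, fails if $v$ is periodic under $f$, in which case infinitely many of the $f^{-k}v$ can land in $S$. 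Note also that your argument nowhere uses the hypothesis that $f$ is hyperbolic, which should be a warning sign.

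The paper's proof supplies exactly the two ingredients your outline is missing. First, it replaces the edge-by-edge triangle inequality with the \emph{Bounded path image} property: taking $\alpha$ a geodesic from $x_0$ to $fx_0$, the concatenation $\alpha\cup f\alpha\cup\cdots\cup f^{n-1}\alpha$ either avoids the $2$--neighborhood of $v$, in which case $d_v(x_0,f^nx_0)\le\Kp$ outright, or it meets it; in the latter case the initial and terminal portions that stay away from $v$ each contribute at most $\Kp$, so no $\theta$'s accumulate there. Second, hyperbolicity of $f$ is used to choose $N$ with $d(x,f^ny)>4$ for $x,y\in\alpha$ and $n\ge N$, which forces the set of indices $j$ with $f^j\alpha$ meeting the $2$--neighborhood of $v$ to be an interval of length less than $N$; only over this short interval is the triangle-inequality sum applied, with each term bounded by the finite constant $M$ obtained from the Finiteness axiom (together with the extra term $M_2=d_{x_0}(f^{-1}x_0,fx_0)$ to handle the case $v=f^{j_0}x_0$). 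This yields $B_f=NM+2\Kp$. To repair your argument you would need to import both steps; the purely combinatorial route through Finiteness and the triangle inequality alone does not close.
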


\begin{proof}
Let $M_1 = \max\{ d_v(x_0,f x_0) \mid v \notin \{x_0,f x_0\} \}$ and $M_2 = d_{x_0}(f^{-1}x_0,f x_0)$.  We remark that $M_1$ is finite by the Finiteness axiom.  Set $M = \max\{M_1,M_2 \}$.  Fix a geodesic $\alpha$ from $x_0$ to $f x_0$.  Let $N$ be such that $d(x,f^n y) > 4$ if $x$ and $y$ lie on $\alpha$, and $n \geq N$.  Define $B_f = NM + 2\Kp$.

By equivariance, it suffices to prove the lemma for non-negative integers.  Fix an $n \in \N$ and suppose that $v \notin \{x_0,f^n x_0\}$.  If $v$ does not lie in the 2--neighborhood of the path $\alpha \cup f \alpha \cup \cdots \cup f^{n-1}\alpha$, then $d_v(x_0,f^nx_0) \leq \Kp \leq B_f$.

Else, there are indices $0 \leq i_0 \leq i_1 \leq n-1$ such that $i_1 - i_0 
< N$ and $v$ lies in the 2--neighborhood of $f^j \alpha$ only if $i_0 \leq j \leq i_1$.  Thus as $v$ does not lie in the 2--neighborhood of $\alpha \cup \cdots \cup f^{i_0-1}\alpha$ nor in the 2--neighborhood of $f^{i_1 + 1}\alpha \cup \cdots \cup f^n\alpha$, we have $d_v(x_0,f^{i_0}x_0) \leq \Kp$
and $d_v(f^{i_1 + 1}x_0,f^n x_0) \leq \Kp$.  

Suppose that $v \neq f^jx_0$ for any $i_0 < j \leq i_1$ (by the definition of $i_0$ and $i_1$ these are the only possible indices).  Then we find that
\begin{align*}
d_v(f^{i_0}x_0,f^{i_1+1}x_0) \leq \sum_{j = i_0}^{i_1} d_v(f^jx_0,f^{j+1}x_0) = \sum_{j = i_0}^{i_1} d_{f^{-j}v}d(x_0,f x_0) \leq NM_1 \leq NM.
\end{align*}   
Else, we have that $v = f^{j_0}x_0$ for some $i_0 < j_0 \leq i_1$.  In this case we find
\begin{align*}
d_v(f^{i_0}x_0,f^{i_1+1}x_0) & \leq \sum_{j = i_0}^{j_0-2} d_v(f^jx_0,f^{j+1}x_0) + d_{f^{j_0}x_0}(f^{j_0-1}x_0,f^{j_0+1}x_0) + \sum_{j = j_0+1}^{i_1} d_v(f^jx_0,f^{j+1}x_0) \\
& (j_0 - 1 - i_0)M_1 + M_2 +  (i_1 - j_0)M_1 \leq (N-2)M_1 + M_2 \leq NM.
\end{align*} 
Therefore
\[ d_v(x_0,f^nx_0) \leq d_v(x_0,f^{i_0}x_0) + d_v(f^{i_0}x_0,f^{i_1+1}x_0) + d_v(f^{i_1+1}x_0,f^n x_0) \leq NM + 2\Kp = B_f. \qedhere \]
\end{proof}

\begin{proof}[Proof of Theorem~\ref{thm:wpd action on quotient}]
Let $\P$ be a projection complex and let $G$ be a group with a non-elementary WPD action on $\P$.

Let $f_1$ and $f_2$ be independent WPD elements in $G$.  Fix some point $x_0$ in $\P$ and let $B_{f_1}$ and $B_{f_2}$ be the constants from Lemma~\ref{lem:bounded projections}.  Let $B_0 = \max\{ d_v(f_1x_0,f_2x_0) \mid v \notin \{ f_1x_0,f_2x_0 \} \}$.  Set $B = B_0 + B_{f_1} + B_{f_2}$.  Let $v$ be a vertex of $\P$ and suppose that $d_v(f_1^{n_1}x_0,f_2^{n_2}x_0)$ is defined for some integers $n_1$ and $n_2$.  If $v \neq x_0$, then
\[ d_v(f_1^{n_1}x_0,f_2^{n_2}x_0) \leq d_v(f_1^{n_1}x_0,x_0) + d_v(x_0,f_2^{n_2}x_0) \leq B_{f_1} + B_{f_2} \leq B. \]
Else, if $v = x_0$, then
\begin{align*} 
d_v(f_1^{n_1}x_0,f_2^{n_2}x_0) \leq  d_v(f_1^{n_1}x_0,f_1x_0) + d_v(f_1x_0,f_2x_0) +  d_v(f_2x_0,f_2^{n_2}x_0) 
\leq B_{f_1} + B_0 + B_{f_2} = B. 
\end{align*}

Let $\Lwpd(\P,G) = \max\{\Llift(B),\Lpro(B)\}$.  Suppose that $\{R_v\}$ is an equivariant $L$--spinning family of subgroups of $G$ where $L \geq \Lwpd(\P,G)$.  Let $H = \grp{R_v}$.

By Lemma~\ref{lem:still wpd}, the images $\bar{f}_1$ and $\bar{f}_2$ are WPD elements of $G/H$ acting on $\P/H$.  Additionally, by Lemma~\ref{lem:project geo}, we have that $d_{\P/H}(\bar{f}_1^{n_1}\bar{x}_0,\bar{f}_2^{n_2}\bar{x}_0) = d_{\P}(f_1^{n_1}x_0,f_2^{n_2}x_0)$ for integers $n_2$ and $n_2$.  As $f_1$ and $f_2$ are independent, this shows that $\bar{f}_1$ and $\bar{f}_2$ are independent as well.  
\end{proof}

\section{Examples}\label{sec:examples}

In this final section we present two examples when $G$ is $\Mod(S)$, the mapping class group of an orientable surface $S$.  In the first example, the subgroup $H$ is the normal closure of a pseudo-Anosov mapping class; in the second example the subgroup $H$ is the normal closure of a partial pseudo-Anosov defined on an orbit-overlapping subsurface.  The first example in fact applies more generally, whenever $G$ is a group acting on a $\delta$--hyperbolic metric space and $g$ is a WPD element for this action.  The relevant background material and definitions relating to the mapping class group that appear in this section can be found in our previous paper with Margalit~\cite{un:CMM}.

Before we give the examples, we first recall the criteria of Bestvina--Bromberg--Fujiwara for showing that an element $g$ of $G$ acts on a projection complex $\P$ as a WPD element.

\ip{WPD criterion.}  Suppose that $\P$ is a projection complex.  Bestvina--Bromberg--Fujiwara proved the existence of a constant $\Kwpd$ which can be used to ensure that an element acting on $\P$ is a WPD element~\cite[Proposition~3.27]{ar:BBF15}.  The set-up is as follows.  Assume that $G$ is a group that acts on $\P$ and that $g$ is an element of $G$ that satisfies the following two conditions.
\begin{enumerate}
\item There is a vertex $v$ in $\P$ and $n > 0$ such that $d_v(g^{-n}v,g^n v) > \Kwpd$; and
\item there is an $m > 0$ such that the subgroup of $G$ that fixes $v, gv, \ldots, g^m v$ is finite.
\end{enumerate}  
Then $g$ is a WPD element of $G$.

\subsection{First example}\label{sec:first example} Let $S$ be an orientable surface where $\chi(S) < 0$ and let $f$ be a pseudo-Anosov mapping class of $S$.  There is a projection complex $\P$ built using $f$ and its action on the curve complex $\C(S)$.  We briefly recall this construction here; full details can be found in our previous paper~\cite[Section~3.2]{un:CMM}.  

Fix a point $x$ in $\C(S)$ and consider the \emph{quasi-axis bundle} $\beta = \EC(f) \cdot x$, where $\EC(f)$ is the \emph{elementary closure of $f$}.  In this context, $\EC(f)$ is the stabilizer of the set of transverse measured foliations associated to $f$ considered in the space of projectivized measured foliations on $S$.  

The vertex set of $\P$ consists of the $\Mod(S)$--translates of $\beta$.  Next we define the distance functions.  Given three vertices $\alpha_1$, $\alpha_2$ and $\beta$ of $\P$, we define $d_\beta(\alpha_1,\alpha_2)$ to be diameter of the union of the projections of $\alpha_1$ and $\alpha_2$ to $\beta$.

Let $g$ be a mapping class of $S$ that does not lie in $\EC(f)$.  We claim that $gf^n$ is a WPD element for the action on $\P$ for $n$ sufficiently large.  As $g$ does not lie in $\EC(f)$, we have that $\beta \notin \{g\beta,g^{-1}\beta\}$.  Next, we have that \[d_\beta((gf^n)^{-1}\beta,gf^n\beta) = d_\beta(g^{-1}\beta,f^ng\beta) \geq d_\beta(g\beta,f^ng\beta) - d_\beta(g\beta,g^{-1}\beta)\] and thus $d_\beta((gf^n)^{-1}\beta,gf^n\beta)$ is bounded below by $An - B$ for some constants $A,B > 0$.  In particular, $d_\beta((gf^n)^{-1}\beta,gf^n\beta) > \Kwpd$ for sufficiently large $n$.  Further, as $g$ does not stabilize the measured foliations associated to $f$, the stabilizer of $\beta$ and $g\beta$ is finite; see~\cite[Proposition~4.7]{un:BBF-v1}.  By the Bestvina--Bromberg--Fujiwara WPD criterion we have that $f_1 = gf^n$ is a WPD element for some fixed sufficiently large enough $n$.  

Given an element $h$ that does not lie in $\EC(f_1)$, the element $f_2 = hf_1h^{-1}$ is a WPD element and $f_1$ and $f_2$ are independent.  Thus the action of $\Mod(S)$ on $\P$ is a non-elementary WPD action.

As explained in the proof of Theorem~1.7 in our previous work, for each $L \geq 0$, there is an $p > 0$ such that the collection of subgroups $R_{h\beta} = \grp{hf^ph^{-1}}$ is an equivariant $L$--spinning family of subgroups.  Therefore, by Theorem~\ref{thm:wpd action on quotient}, the elements of $\bar f_1$ and $\bar f_2$ in $\Mod(S)/\nc{f^p}$ are independent WPD elements for its action on $\P/\nc{f^p}$ for a certain large enough $p$.

As mentioned at the beginning of this section, this above discussion works in the larger context of a group $G$ acting on a $\delta$--hyperbolic space using a WPD element $f$ of $G$.

\subsection{Second example}\label{sec:second example} Again, let $S$ be an orientable surface where $\chi(S) < 0$.  Let $X$ be a connected subsurface of $S$ so that for all $h \in \Mod(S)$, either $X = hX$ or $X$ and $hX$ have nontrivial intersection (what is called an \emph{orbit-overlapping} subsurface in our previous work~\cite{un:CMM}).     There is a projection complex $\P$ built using $X$ and the curve complex $\C(S)$.  We briefly recall this construction here; full details can be found in our previous paper~\cite[Section~3.3]{un:CMM}.  

The vertices of $\P$ are the $\Mod(S)$--translates of $X$.  Given three vertices $Y_1$, $Y_2$ and $X$ of $\P$, the distance $d_X(Y_1,Y_2)$ is the diameter in $\C(X)$ of the Masur--Minsky subsurface projections of $Y_1$ and $Y_2$ to $X$~\cite{ar:MM00}.      

There is a well-defined map $\Mod(X) \to \Mod(S)$; fix an element $f$ in $\Mod(S)$ that is the image of a pseudo-Anosov element on $X$.  Let $g$ be a mapping class of $S$ such that $\partial X$ and $g\partial X$ fill $S$.  We claim that $gf^n$ is a WPD element for the action on $\P$ for sufficiently large $n$.  The proof is similar to the first example and left to the reader.

Hence, as above, there are elements $f_1$ and $f_2$ in $\Mod(S)$ that are independent WPD elements for the action on $\P$.  By taking certain $p$ sufficiently large, we can ensure that the equivariant family of subgroups $R_{hX} = \nc{hf^ph^{-1}}_{\Stab(hX)}$ is $L$--spinning for arbitrary $L$.  Hence we can ensure that the images $\bar f_1$ and $\bar f_2$ are independent WPD elements for the action of $\Mod(S)/\nc{f^p}$ on $\P/\nc{f^p}$.
  
Similar arguments apply to the other subgroups of $\Mod(S)$ constructed in our previous work.


\bibliography{hypquotient}
\bibliographystyle{acm}

\end{document}